\newlist{steps}{enumerate}{1}
\setlist[steps, 1]{label = \underline{Step \arabic*}.}
\newtheorem{theorem}{Theorem}
\newtheorem{remark}{Remark}
\newtheorem{lemma}{Lemma}
\newtheorem{assumption}{Assumption}
\def\R{\mathbb{R}}
\journal{TBA}
\def\figsize{0.48}
\begin{document}
\begin{frontmatter}



\title{Generalized convergence of the deep BSDE method: a step towards fully-coupled FBSDEs and applications in stochastic control}

\author[label1]{Balint Negyesi\corref{cor1}}
\ead{b.negyesi@uu.nl}
\author[label2]{Zhipeng Huang}
\author[label2]{Cornelis W. Oosterlee}
\affiliation[label1]
            {organization={Delft Institute of Applied Mathematics (DIAM), Delft University of Technology}, addressline={P.O. Box 5031},
postcode={2600 GA},
city={Delft},
country={The Netherlands}}
\affiliation[label2]{organization={Mathematical Institute, Utrecht University}, addressline={Postbus 80010}, postcode={3508 TA}, city={Utrecht}, country={The Netherlands}}
\cortext[cor1]{Corresponding author}
\begin{abstract}
We are concerned with high-dimensional coupled FBSDE systems approximated by the deep BSDE method of Han et al. (2018). 
It was shown by Han and Long  (2020) that the errors induced by the deep BSDE method admit a posteriori estimate depending on the loss function, whenever the backward equation only couples into the forward diffusion through the $Y$ process. We generalize this result to drift coefficients that may also depend on $Z$, and give sufficient conditions for convergence under standard assumptions. The resulting conditions are directly verifiable for any equation. Consequently, unlike in earlier theory, our convergence analysis enables the treatment of FBSDEs stemming from stochastic optimal control problems. In particular, we provide a theoretical justification for the non-convergence of the deep BSDE method observed in recent literature, and present direct guidelines for when convergence can be guaranteed in practice. Our theoretical findings are supported by several numerical experiments in high-dimensional settings.
\end{abstract}

\begin{keyword}

deep BSDE\sep coupled FBSDE\sep posteriori estimate\sep convergence


\end{keyword}

\end{frontmatter}

\section{Introduction}\label{sec1}

In this paper, we are concerned with the numerical approximation of a system of coupled forward-backward stochastic differential equations (FBSDE) over a finite time interval $[0, T]$
\begin{equation}\label{eq:FBSDE}
\left\{
    \begin{aligned}
        X_t &= x_0 + \int_0^t b(s, X_s, Y_s, Z_s) \mathrm{d}s + \int_0^t \sigma(s, X_s, Y_s) \mathrm{d}W_s,\\
        Y_t &= g(X_T) + \int_t^T f(s, X_s, Y_s, Z_s) \mathrm{d}s - \int_t^T Z_s\mathrm{d}W_s,
    \end{aligned}
\right.
\end{equation}
where $b: [0,T]\times \mathbb{R}^{d} \times \mathbb{R}^{q} \times \mathbb{R}^{q\times m} \to \mathbb{R}^{d} $, $\sigma: [0,T]\times \mathbb{R}^{d} \times \mathbb{R}^{q} \to \mathbb{R}^{d\times m} $, $f:[0,T]\times \mathbb{R}^{d} \times \mathbb{R}^{q} \times \mathbb{R}^{q\times m} \to \mathbb{R}^{q}$ and $g: \mathbb{R}^{d} \to \mathbb{R}^{q} $ are all deterministic mappings. 
The equation is given on a complete probability space $(\Omega, \mathcal{F}, \mathbb{P})$, over which $\{W_t\}_{0\leq T}$ is a standard $m$-dimensional Brownian motion, $\mathcal{F}:=\{\mathcal{F}_t\}_{0 \leq t \leq T}$ its natural filtration and augmented by the usual $\mathbb{P}$-null sets. A triple of $(\mathbb{R}^d\times \mathbb{R}^q\times \mathbb{R}^{q\times m})$ valued, $\mathcal{F}_t$ adapted stochastic processes $\{(X_t, Y_t, Z_t)\}_{0\leq t \leq T}$
is a solution if \eqref{eq:FBSDE} holds $\mathbb{P}$ almost surely and the processes satisfy natural integrability conditions, see \cite{zhang2017backward, ma_forward-backward_2007}.

The study of FBSDEs was initiated by the seminal paper of Pardoux and Peng \cite{pardouxpeng}, and then extended to coupled equations by Antonelli \cite{antonelli_backward-forward_1993}. Equations like \eqref{eq:FBSDE} subsequently attracted widespread attention due to their inherent connections with systems of second-order quasi-linear partial differential equations (PDE) established by non-linear extension to the Feynman-Kac lemma, see e.g. \cite{ma_forward-backward_2007, zhang2017backward} and theorem \ref{uniquesol} below. This probabilistic representation casts FBSDEs to be the natural framework to model a wide range of problems arising in mathematical finance, physics, biology and stochastic control. The well-posedness of \eqref{eq:FBSDE} has been rigorously studied and established under by now standard assumptions, see e.g. \cite{ma_forward-backward_2007, pardoux1999forward, delarue_existence_2002, delarue_weak_2006} and the references therein. Most of such results rely either on classical solutions of the corresponding quasi-linear PDE with high regularity or some abstract conditions heuristically associated with monotonicity, small time duration or weak coupling. In the rest of the article, we consider the setting where \eqref{eq:FBSDE} is well-posed and admits a unique strong solution triple.

Solving FBSDEs analytically is seldom possible and one usually has to resort to numerical approximations. In the decoupled framework where $b, \sigma$ in \eqref{eq:FBSDE} do not depend on $Y, Z$, one can detach the solution of $X$, and use the resulting discrete time approximation in order to approximate the solution pair of the BSDE by sequences of backward, recursive conditional expectations, we refer to \cite{gobet_regression-based_2005, bouchard_discrete-time_2004, bender2007forward, bender_least-squares_2012, ruijter_fourier_2015}. In the coupled framework, things become more subtle due to the interdependence from the backward equation into the forward diffusion. Inspired by \cite{ma_solving_1994}, classical approaches usually consist of \emph{decoupling} the forward diffusion by means of deterministic mappings, and iteratively converging to the unique decoupling field related to the associated quasi-linear PDE, see e.g. \cite{ma_solving_1994, douglas_numerical_1996, cvitanic_steepest_2005, delarue_forward_2006, bender2008time, huijskens_efficient_2016}. A common challenge across the aforementioned classical references is the setting of high-dimensionality. In fact, whenever either $d, q$ or $m$ are large, these methods suffer from the curse dimensionality and become intractable in high dimensions. Such settings arise naturally, for instance, in portfolio allocation or climate risk management.
A recently emerging branch of numerical algorithms called \emph{deep BSDE} methods and pioneered by \cite{han2018, e_deep_2017} addressed this gap, and has shown remarkable empirical performance in terms of tackling high-dimensional FBSDEs and associated quasi-linear PDEs. These approaches were first developed for decoupled equations, see e.g. \cite{han2018, e_deep_2017, hure2020deep, gao_convergence_2023, negyesi_one_2024}, and then extended to the coupled framework \cite{hanlong2020, andersson2023}. For an overview, we refer to the recent survey \cite{chessari2023numerical}. 

Motivated by their outstanding empirical performance, serious efforts have been made in order to establish convergence guarantees for such deep BSDE algorithms. Originally the pioneering paper of Han and Long \cite{hanlong2020} managed to show a \emph{posteriori bound} which depends on the objective loss functional of the machine learning algorithm. Their result was later extended in \cite{jiang_convergence_2021} to the case of non-Lipschitz continuous drift coefficients, and in \cite{huang_convergence_2025} to the vector-valued framework in the context of stochastic optimal control. These works all have in common that they relied on the assumption of a narrower class of FBSDEs, in fact they considered a special case of \eqref{eq:FBSDE} where only the $Y$ process enters the dynamics of $X$, and not $Z$. Consequently, these convergence results were inapplicable in the context a wide range of stochastic control problems, for instance formulated through the dynamic programming principle \cite{pham2009continuous}, where coupling occurs in $Z$. In particular, these works could not provide a theoretical explanation for the phenomena observed in \cite{andersson2023}, where they found empirical evidence for the non-convergence of the deep BSDE method for FBSDEs stemming from stochastic control. More recently, the authors of \cite{reisinger_posteriori_2024} proved an \emph{a-posteriori} error estimate in the framework of McKean-Vlasov FBSDEs. They give a new result for general, fully-coupled equations even a mean-field term, extending earlier analyses for decoupled equations \cite{bender_posteriori_2013}. Nonetheless, their result is based on a different set of assumptions as the one considered herein, which is concerned with the framework of \cite{hanlong2020}.

The main motivation of the present paper is to extend the convergence result of Han and Long \cite{hanlong2020} to drift coefficients in \eqref{eq:FBSDE} that depend on the $Z$ process as well. The main challenge is to handle the error estimate of the $X$ process with extra $Z$ coupling, which we control by the new estimates established in lemma 
\ref{lem:estimate2}. This enables us to derive our main result, stated in theorem \ref{thm:estimate2}, which is a posteriori error estimate similar to \cite{hanlong2020}. 
In particular, our work enjoys several relevant features: 
\begin{itemize}
    \item we recover the results of \cite{hanlong2020} in the limit case of no $Z$ coupling;
    \item our result is applicable for a general class of FBSDEs, including, but not limited to, the ones obtained for stochastic control problems stemming either from dynamic programming \cite{andersson2023} or the stochastic maximum principle \cite{ji2022, huang_convergence_2025}, due to the coupling of not just $Y$ but also $Z$ in the forward process;
    \item given a particular FBSDE, we can check whether or not it satisfies the convergence conditions through a directly verifiable approach.
\end{itemize}
The paper is organized as follows. In section \ref{section:deep_bsde}, we give the discrete time approximation scheme of the deep BSDE algorithm with $Z$ coupling in the drift $b$. Section \ref{section:convergence} contains our main result stated in theorem \ref{thm:estimate2}. Thereafter the abstract sufficient conditions of convergence are analyzed in section \ref{section:interpretation}. In particular, we show that the assumptions of theorem \ref{thm:estimate2} hold under heuristic interpretations such as \emph{weak coupling} or \emph{small time duration}. Additionally, we get earlier convergence results \cite{hanlong2020} as a limit case of our more general theory. Finally, we demonstrate our theoretical contributions by several numerical experiments on high-dimensional FBSDEs in section \ref{section:numerical}. These simulations confirm and showcase our theoretical findings.

\section{The deep BSDE algorithm}\label{section:deep_bsde}
In this section we formulate the deep BSDE algorithm for FBSDEs as in \eqref{eq:FBSDE}, naturally extending \cite{han2018, e_deep_2017, hanlong2020} to the framework of $Z$ coupling in $b$. For the rest of the paper, we denote the Frobenius norm by $\|x\|$ for any $x\in\mathbb{R}^{i\times j}$, not to be confused with the matrix $2$-norm $\|x\|_2$. Without loss of generality, we work with an equidistant time partition $\pi\coloneqq \{t_i, i=0, \dots, N\vert 0=t_0<t_1<\cdots<t_N=T\}$ with $h=T/N$ and study the following discretization
\begin{subequations}
    \begin{align}
        \inf_{\varphi_0 \in \mathcal{N}_0^Y(\theta_0^Y), \zeta_i \in  \mathcal{N}_i^Z(\theta^Z_i)}  &\mathbb{E} \left[\left\| g\left(X_{t_{N}}^\pi\right) - Y_{t_{N}}^\pi \right\|^2\right], \label{euler_objective}  \\
        &\text{s.t.}
\left\{
\begin{aligned}
&X_0^\pi= x_0, \\
&Y_0^\pi= \varphi_0(x_0; \theta^{Y}_{0}), \\
&X_{t_{i+1}}^\pi = \begin{aligned}[t]
    X_{t_i}^\pi &+ b\left(t_i, X_{t_i}^\pi, Y_{t_i}^\pi, Z_{t_i}^\pi \right) h+ \sigma\left(t_i, X_{t_i}^\pi, Y_{t_i}^\pi \right) \Delta W_i,
\end{aligned} \\
& Z_{t_i}^\pi = \zeta_i(X_{t_i}^{\pi}; \theta^{Z}_{i}) , \\
& Y_{t_{i+1}}^\pi = Y_{t_i}^\pi - f\left(t_i, X_{t_i}^\pi, Y_{t_i}^\pi, Z_{t_i}^\pi\right) h +  Z_{t_i}^\pi  \Delta W_i,
\end{aligned}
\right.\label{eq:euler}
    \end{align}
\end{subequations}
for $i=0, \dots, N-1$, where we put $\Delta W_i\coloneqq W_{t_{i+1}}-W_{t_i}$. In doing so, the numerical solution of a coupled FBSDE \eqref{eq:FBSDE} is reformulated into a stochastic optimization problem consisting of the minimization of an objective functional \eqref{euler_objective} subject to the Euler-Maruyama discretization \eqref{eq:euler}. As in the continuous limit the loss functional \eqref{euler_objective} attains $0$ at the unique solution triple $\{(X_t, Y_t, Z_t)\}_{0\leq t\leq T}$ of \eqref{eq:FBSDE} while also satisfying \eqref{eq:euler}, it is expected that for sufficiently large $N$ and sufficiently wide function spaces $\mathcal{N}_0^Y, \mathcal{N}_i^Z$, the solution of \eqref{euler_objective}-\eqref{eq:euler} is an accurate discrete time approximation of \eqref{eq:FBSDE}. Motivated by universal approximation arguments, see e.g. \cite{hornik_approximation_1991},
we set $\mathcal{N}_0^Y(\theta_0^Y), \mathcal{N}_i^Z(\theta_i^Z)$ to be spaces of deep neural networks parametrized by $\theta_0^Y, \theta_i^Z$ for $i=0, \dots, N-1$.
Subsequently, the goal of the deep BSDE method is to solve this non-linear, constrained optimization problem through the training of deep neural networks. Hence, we seek to find $\varphi_0(x_0; \theta_0^Y)\in \mathcal{N}_0^Y$ and $\zeta_i(X_{t_i}^\pi; \theta_i^Z)\in\mathcal{N}_i^Z$ that approximate $Y_0$ and $Z_{t_i}$ sufficiently well.
The resulting pseudo-code for the complete deep BSDE method is collected in algorithm \ref{algorithm}, its implementation is discussed in section \ref{section:numerical}.
\begin{algorithm}
  \caption{Deep BSDE algorithm}
  \begin{algorithmic}[1] 
    \State \textbf{Input:} Initial parameters $(\theta^{Y, (0)}_0, \theta^{Z, (0)}_i)$, learning rate $\eta$; batch size $M$; number of iterations $K$.
    \State \textbf{Data:} Simulated Brownian increments $\{\Delta W_{t_i}^{(k)}\}_{0\leq i\leq N-1, 1\leq k\leq K}$
    \State \textbf{Output:} Discrete time approximations $\{(\hat{X}_{t_i}^\pi, \hat{Y}_{t_i}^\pi, \hat{Z}_{t_i}^\pi)\}_{i=0, \dots, N}$
    \For{$k = 1$ to $K$}\Comment{Euler-Maruyama \eqref{eq:euler}}
        \State $X_{t_0}^{\pi, (k)} = x_0$, $Y_{t_0}^{\pi, (k)} = \varphi_0(x_0; \theta^{Y, (k-1)}_{0}) $ 
        \For{$i = 0$ to $N-1$}
        \State $ Z_{t_i}^{\pi, (k)} = \zeta_i(X_{t_i}^{\pi, (k)}; \theta^{Z, (k-1)}_{i}) $ 
        \State $ X_{t_{i+1}}^{\pi, (k)} =  X_{t_i}^{\pi, (k)} + b(t_i, X_{t_i}^{\pi, (k)}, Y_{t_i}^{\pi, (k)}, Z_{t_i}^{\pi, (k)}) h  + \sigma (t_i, X_{t_i}^{\pi, (k)}, Y_{t_i}^{\pi, (k)}) \Delta W_{t_i}^{(k)} $
        \State $ Y_{t_{i+1}}^{\pi, (k)} = Y_{t_i}^{\pi, (k)} -  f(t_i, X_{t_i}^{\pi, (k)}, Y_{t_i}^{\pi, (k)}, Z_{t_i}^{\pi, (k)}) h + Z_{t_i}^{\pi, (k)} \Delta W_{t_i}^{(k)} $ 
        \EndFor
    \State $\text{Loss} = \frac{1}{M} \sum_{j=1}^M  \| g( X_{t_N}^{\pi, (k)}) - Y_{t_N}^{\pi, (k)} \|^2 $\Comment{empirical \eqref{euler_objective}}
    \State $  (\theta^{Y, (k)}_0, \theta^{Z, (k)}_0,\dots, \theta^{Z, (k)}_{N-1}) =  (\theta^{Y, (k-1)}_0, \theta^{Z, (k-1)}_0,\ldots, \theta^{Z, (k-1)}_{N-1}) - \eta \nabla \text{Loss} $ \Comment{SGD}
    \EndFor
    \State $(\hat{X}_i^\pi, \hat{Y}_{t_{i}}^\pi, \hat{Z}_{t_{i}}^\pi)=(X_{t_{i}}^{\pi, (K+1)}, Y_{t_{i}}^{\pi, (K+1)}, Z_{t_{i}}^{\pi, (K+1)}),\quad i=0, \dots, N-1$
  \end{algorithmic}
  \label{algorithm}
\end{algorithm}
\section{Convergence analysis}\label{section:convergence}

This section is dedicated to our generalized convergence analysis for the deep BSDE method reviewed in section \ref{section:deep_bsde} and the discrete scheme \eqref{eq:euler} for \eqref{eq:FBSDE}. In particular, we will show that the approximation errors of the numerical solution to the FBSDE are bounded by the simulation error of the objective function corresponding to \eqref{euler_objective} which could be arbitrarily small due to the universal approximation theorem.
The convergence analysis follows a similar strategy as that of \cite{hanlong2020}.
We first introduce the standing assumptions and review some useful result. Throughout the paper we use the notation $\mathbb{E}_i[\cdot] \coloneqq \mathbb{E}[\cdot |\mathcal{F}_i]$.
\begin{assumption}\label{assumption:kb_kf}
There exist constants $k^b$ and $k^f$, that are possibly negative, such that
\begin{equation}
\begin{aligned}
\left( b(t, x_1, y, z) - b(t, x_2, y, z) \right)^\top \Delta x 
& \leq k^b \|\Delta x\|^2, \\
\left( f(t, x, y_1, z)- f(t, x, y_2, z) \right)^\top \Delta y 
& \leq k^f \|\Delta y\|^2.
\end{aligned}
\end{equation}
\end{assumption}
\begin{assumption}\label{assumption:lipschitz}
$b, \sigma, f, g$ are uniformly Lipschitz continuous with respect to $(x, y, z)$. In particular, there are non-negative constants such that
\begin{equation}
\begin{aligned}
\left\|b(t, x_1, y_1, z_1) - b(t, x_2, y_2, z_2)\right\|^2 
& \leq L^{b}_x \|\Delta x\|^2 + L^{b}_y \|\Delta y\|^2 + L^{b}_z \|\Delta z\|^2,  \\
\left\|\sigma(t, x_1, y_1) - \sigma(t, x_2, y_2)\right\|^2 
& \leq L^{\sigma}_x \|\Delta x\|^2 + L^{\sigma}_y \|\Delta y\|^2, \\
\left\| f\left(t, x_1, y_1, z_1\right) - f\left(t, x_2, y_2, z_2\right)\right\|^2 
& \leq L^{f}_x \|\Delta x\|^2+ L^{f}_y \|\Delta y\|^2 + L^{f}_z \|\Delta z\|^2, \\
\left\| g (x_1) - g(x_2)\right\|^2 & \leq  L^{g}_x \|\Delta x\|^2.
\end{aligned}
\end{equation}
\end{assumption}
\begin{assumption} \label{assumption:bounded}
$g(0), b(t,0,0,0), f(t,0,0,0)$ and $\sigma(t,0,0)$ are bounded for $t\in[0, T]$.
\end{assumption}
Notice that assumption \ref{assumption:lipschitz} implies \ref{assumption:kb_kf} with $k^b, k^f\geq 0$. The reason for allowing for negativity shall be made clear by the forthcoming convergence result, see points \eqref{interpretation:k_b<0}, \eqref{interpretation:k_f<0} in section \ref{section:interpretation} below. For convenience, we use $\mathscr{L}$ to denote the set of all constants mentioned above and assume $L$ is the upper bound of $\mathscr{L}$.

Next, we introduce the following system of quasi-linear parabolic PDEs associated with FBSDE \eqref{eq:FBSDE}, 
\begin{equation}\label{eq:quasi-linear-pde}
\left\{
\begin{aligned}
& \partial_t \nu^i  +\frac{1}{2} \partial_{xx} \nu^i: \sigma \sigma^{\top}(t, x, \nu)+\partial_x \nu^i b\left(t, x, \nu, \partial_x \nu \sigma(t, x, \nu)\right)+ f^i\left(t, x, \nu, \partial_x \nu \sigma(t, x, \nu)\right)=0,\\
& \nu(T, x) = g(x),\quad \forall i=1, \cdots, q,
\end{aligned}
\right.
\end{equation}
The following assumption is needed in order to guarantee convergence of the implicit Euler-Maruyama scheme in theorem \ref{convergence_im}.
\begin{assumption}\label{assumption:pde}
The PDE \eqref{eq:quasi-linear-pde} has a classical solution $\nu$ with bounded derivatives $\partial_x \nu$ and $\partial_{xx}^2 \nu$, and $\sigma$ is bounded.
\end{assumption}
The non-linear Feynman-Kac lemma, stated below, establishes the connection between \eqref{eq:quasi-linear-pde} and \eqref{eq:FBSDE}.
\begin{theorem}[Feynman-Kac]\label{uniquesol} Under assumptions \ref{assumption:lipschitz}, \ref{assumption:bounded} and \ref{assumption:pde}, the FBSDE \eqref{eq:FBSDE} has a unique solution $(X, Y, Z)$, and it holds that for $t\in [0, T]$,
\begin{equation}\label{eq: decouple}
Y_t=\nu\left(t, X_t\right), \quad Z_t=\partial_x \nu\left(t, X_t\right) \sigma\left(t, X_t, \nu\left(t, X_t\right)\right).
\end{equation}
\end{theorem}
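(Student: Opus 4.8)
The plan is to prove Theorem~\ref{uniquesol} by a decoupling argument in the spirit of the four-step scheme, using the classical solution $\nu$ supplied by Assumption~\ref{assumption:pde}. First I would record the regularity of $\nu$ that will be used throughout: boundedness of $\partial_x\nu$ makes $\nu(t,\cdot)$ globally Lipschitz uniformly in $t$ — hence of at most linear growth, since $\nu(T,\cdot)=g$ has linear growth by Assumptions~\ref{assumption:lipschitz}--\ref{assumption:bounded} and $\nu$ is continuous on $[0,T]$ — while boundedness of $\partial^2_{xx}\nu$ makes $\partial_x\nu(t,\cdot)$ Lipschitz uniformly in $t$. Combined with the Lipschitz continuity and growth bounds on $b,\sigma$ and the boundedness of $\sigma$, this shows that the decoupled coefficients
\[
\tilde b(t,x):=b\bigl(t,x,\nu(t,x),\partial_x\nu(t,x)\,\sigma(t,x,\nu(t,x))\bigr),\qquad \tilde\sigma(t,x):=\sigma\bigl(t,x,\nu(t,x)\bigr)
\]
are Lipschitz in $x$, uniformly in $t$, with at most linear growth (products of bounded Lipschitz maps are Lipschitz).

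For the existence part, I would invoke classical SDE theory to obtain the unique strong solution $X$ of $X_t=x_0+\int_0^t\tilde b(s,X_s)\,\mathrm{d}s+\int_0^t\tilde\sigma(s,X_s)\,\mathrm{d}W_s$, together with the standard moment estimate $\E[\sup_{t\le T}\norm{X_t}^2]<\infty$. Setting $Y_t:=\nu(t,X_t)$ and $Z_t:=\partial_x\nu(t,X_t)\,\sigma(t,X_t,Y_t)$, the integrability requirements follow from the linear growth of $\nu$ and of $\partial_x\nu\,\sigma$ together with the moments of $X$. Applying Itô's formula componentwise to $\nu^i(t,X_t)$ and substituting the PDE \eqref{eq:quasi-linear-pde} evaluated along $X_t$ (note that the arguments $(\nu,\partial_x\nu\,\sigma(\cdot,\cdot,\nu))$ appearing in \eqref{eq:quasi-linear-pde} coincide with $(Y_t,Z_t)$ here), the $\mathrm{d}t$-coefficient collapses to $-f^i(t,X_t,Y_t,Z_t)$ and the martingale part is exactly $Z_t\,\mathrm{d}W_t$; since $\nu(T,X_T)=g(X_T)$, the pair $(Y,Z)$ solves the backward equation in \eqref{eq:FBSDE}, so $(X,Y,Z)$ is a solution satisfying \eqref{eq: decouple} by construction.

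For uniqueness, let $(X',Y',Z')$ be any solution of \eqref{eq:FBSDE} and set $u_t:=Y'_t-\nu(t,X'_t)$ and $\tilde v_t:=Z'_t-\partial_x\nu(t,X'_t)\,\sigma(t,X'_t,Y'_t)$. Applying Itô to $\nu^i(t,X'_t)$ — now with the $(Y',Z')$-dependent dynamics of $X'$ — and eliminating $\partial_t\nu^i$ via \eqref{eq:quasi-linear-pde} shows that $(u,\tilde v)$ solves a backward equation $u_t=\int_t^T A_s\,\mathrm{d}s-\int_t^T\tilde v_s\,\mathrm{d}W_s$ with $u_T=g(X'_T)-\nu(T,X'_T)=0$, whose driver $A_s$ gathers the driver difference $f^i(s,X'_s,\nu,\partial_x\nu\,\sigma(s,X'_s,\nu))-f^i(s,X'_s,Y'_s,Z'_s)$ together with the mismatch terms $\partial_x\nu^i[b(s,X'_s,\nu,\partial_x\nu\,\sigma(s,X'_s,\nu))-b(s,X'_s,Y'_s,Z'_s)]$ and $\tfrac12\partial^2_{xx}\nu^i:[\sigma\sigma^\top(s,X'_s,\nu)-\sigma\sigma^\top(s,X'_s,Y'_s)]$. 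Using Assumption~\ref{assumption:lipschitz}, the boundedness of $\sigma,\partial_x\nu,\partial^2_{xx}\nu$, and the elementary identity $\partial_x\nu\,\sigma(s,X'_s,\nu)-Z'_s=\partial_x\nu[\sigma(s,X'_s,\nu)-\sigma(s,X'_s,Y'_s)]-\tilde v_s$, each of these is dominated by $C(\norm{u_s}+\norm{\tilde v_s})$ with $C$ depending only on $\mathscr{L}$, $T$ and the bounds on the derivatives of $\nu$. Applying Itô to $\norm{u_t}^2$, taking expectations (the local-martingale term vanishes by the usual localization, using the solution's integrability), and using Young's inequality to absorb the $\norm{\tilde v_s}^2$-contribution, one obtains $\E\norm{u_t}^2\le C'\int_t^T\E\norm{u_s}^2\,\mathrm{d}s$; backward Gr\"onwall then forces $u\equiv0$, hence $\tilde v\equiv0$, i.e. \eqref{eq: decouple} holds for $(X',Y',Z')$ too. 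Consequently $X'$ solves the same decoupled SDE $\mathrm{d}X'_t=\tilde b(t,X'_t)\,\mathrm{d}t+\tilde\sigma(t,X'_t)\,\mathrm{d}W_t$ as $X$, so $X'=X$ by pathwise uniqueness, and then $Y'=\nu(\cdot,X')=Y$ and $Z'=Z$.

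I expect the uniqueness step to be the main obstacle. The Itô expansion of $\nu(t,X'_t)$ inevitably carries the solution's own arguments $(Y'_t,Z'_t)$ inside $b$ and $\sigma$, whereas \eqref{eq:quasi-linear-pde} is written with the self-consistent arguments $(\nu,\partial_x\nu\,\sigma(\cdot,\cdot,\nu))$; reconciling the two and verifying that every resulting difference term is genuinely $O(\norm{u_s}+\norm{\tilde v_s})$ — so that the quadratic estimate for $\norm{u}^2$ closes into a Gr\"onwall loop instead of leaving an uncontrolled $\norm{\tilde v}^2$ term on the right — is the computation that requires care, and is precisely where the bounds on $\partial_x\nu$ and $\partial^2_{xx}\nu$ from Assumption~\ref{assumption:pde} enter.
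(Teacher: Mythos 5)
Your proposal is correct and follows essentially the same route as the paper, which does not spell out the argument but defers to the cited proof in Zhang (2017, pp.~185--186): the standard decoupling (four-step-scheme) construction of $X$ from the coefficients frozen along $\nu$, Itô's formula plus the PDE \eqref{eq:quasi-linear-pde} to verify that $(\nu(t,X_t),\partial_x\nu\,\sigma)$ solves the backward equation, and a Grönwall/Young estimate on $Y'_t-\nu(t,X'_t)$ to force \eqref{eq: decouple} for any solution and hence pathwise uniqueness. Your handling of the mismatch terms via the identity $\partial_x\nu\,\sigma(s,X'_s,\nu)-Z'_s=\partial_x\nu[\sigma(s,X'_s,\nu)-\sigma(s,X'_s,Y'_s)]-\tilde v_s$ and the boundedness of $\sigma,\partial_x\nu,\partial^2_{xx}\nu$ is exactly the point where Assumption~\ref{assumption:pde} is needed, so no gap remains.
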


\begin{remark} The proof of this theorem can be found in \cite[pp. 185-186]{zhang2017backward}. Similar to other numerical methods for coupled FBSDEs, we use this theorem to decouple the original FBSDE \eqref{eq:FBSDE} in order to be able to exploit standard results from the decoupled FBSDE literature. 
\end{remark}
In addition to the assumptions above, we need Hölder-continuity in time for the convergence of the implicit scheme for \eqref{eq:FBSDE}, as stated below.
\begin{assumption}\label{assumption:holder}
$b, \sigma, f$ in \eqref{eq:FBSDE} are uniformly Hölder-$\frac{1}{2}$-continuous with respect to $t$. 
\end{assumption}

Our main result in theorem \ref{thm:estimate2} is concerned with an a posteriori error estimate for \emph{discrete time approximations} of the continuous FBSDE in \eqref{eq:FBSDE}. A necessary ingredient in establishing this is to show that an appropriate discretization such as \eqref{implicit_scheme} below, converges in the number of time steps. This is given in the following theorem.
\begin{theorem}[Convergence of the implicit scheme]\label{convergence_im} Suppose assumptions \ref{assumption:lipschitz}, \ref{assumption:bounded}, \ref{assumption:pde} and \ref{assumption:holder}, and let a suitable set of monotonicity conditions such as in \cite{bender2008time} or \cite{reisinger_posteriori_2024} hold. Then for a sufficiently small $h$, the following discrete-time equation $(0 \leq i \leq N-1)$
\begin{equation}\label{implicit_scheme}
\left\{
\begin{aligned}
&\bar{X}_0^\pi= x_0,\\
&\bar{X}_{t_{i+1}}^\pi = \bar{X}_{t_i}^\pi + b\left(t_i, \bar{X}_{t_i}^\pi, \bar{Y}_{t_i}^\pi,  \bar{Z}_{t_i}^\pi \right) h + \sigma\left(t_i, \bar{X}_{t_i}^\pi, \bar{Y}_{t_i}^\pi\right) \Delta W_i, \\
&\bar{Y}_T^\pi = g\left(\bar{X}_T^\pi\right), \\
&\bar{Z}_{t_i}^\pi = \frac{1}{h} \mathbb{E}_i \left[ \bar{Y}_{t_{i+1}}^\pi \Delta W_i^\top \right], \\
&\bar{Y}_{t_i}^\pi = \mathbb{E}_i \left[ \bar{Y}_{t_{i+1}}^\pi + f\left(t_i, \bar{X}_{t_i}^\pi, \bar{Y}_{t_i}^\pi, \bar{Z}_{t_i}^\pi\right) h \right],
\end{aligned}
\right.
\end{equation}
has a solution $\{(\bar{X}_{t_i}^\pi, \bar{Y}_{t_i}^\pi, \bar{Z}_{t_i}^\pi)\}_{i=0, \dots, N}$, such that $\bar{X}_{t_i}^\pi \in L^2\left(\Omega, \mathcal{F}_{t_i}, \mathbb{P}\right)$ and
\begin{equation}\label{eq:implicit_scheme:convergence:estimate}
\sup_{t \in[0, T]} \left( \mathbb{E} [ \left\|X_t-\bar{X}_t^\pi\right\|^2 ] + \mathbb{E} [\left\|Y_t-\bar{Y}_t^\pi\right\|^2 ]  \right) + \int_0^T \mathbb{E} [ \left\|Z_t-\bar{Z}_t^\pi\right\|^2 ] \mathrm{d}t
\leq C \left(1+\mathbb{E}\|x_0\|^2\right) h,
\end{equation}
with $\bar{X}_t^\pi\coloneqq\bar{X}_{t_i}^\pi, \bar{Y}_t^\pi\coloneqq\bar{Y}_{t_i}^\pi, \bar{Z}_t^\pi\coloneqq\bar{Z}_{t_i}^\pi$ for $t \in\left[t_i, t_{i+1}\right)$, where $C$ is a constant depending on $\mathscr{L}$ and $T$.
\end{theorem}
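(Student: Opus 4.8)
The plan is to establish existence of the implicit-scheme solution and the first-order rate $O(h)$ by a standard but careful two-part argument, reducing the coupled problem to a decoupled one via the decoupling field $\nu$ of Theorem~\ref{uniquesol}, and then invoking existing convergence theory for the implicit Euler scheme of a decoupled FBSDE. First I would use Theorem~\ref{uniquesol} and Assumption~\ref{assumption:pde} to write $Y_t=\nu(t,X_t)$, $Z_t=\partial_x\nu(t,X_t)\sigma(t,X_t,\nu(t,X_t))$, so that the forward process $X$ satisfies an SDE with coefficients $\tilde b(t,x)\coloneqq b(t,x,\nu(t,x),\partial_x\nu(t,x)\sigma(t,x,\nu(t,x)))$ and $\tilde\sigma(t,x)\coloneqq\sigma(t,x,\nu(t,x))$ that are Lipschitz in $x$ (by Assumptions~\ref{assumption:lipschitz} and \ref{assumption:pde}, since $\partial_x\nu,\partial_{xx}\nu,\sigma$ are bounded, the composed maps are Lipschitz) and Hölder-$\tfrac12$ in $t$ (by Assumption~\ref{assumption:holder} plus regularity of $\nu$). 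The monotonicity hypothesis imported from \cite{bender2008time} or \cite{reisinger_posteriori_2024} is exactly what guarantees that the implicit system \eqref{implicit_scheme} is solvable for $h$ small: at each step one solves a fixed-point equation for $(\bar X_{t_i}^\pi,\bar Y_{t_i}^\pi,\bar Z_{t_i}^\pi)$, and for $h$ below a threshold depending only on $\mathscr L$ the relevant map is a contraction, yielding a unique $\mathcal F_{t_i}$-measurable, square-integrable solution; I would state this and cite the corresponding result rather than redo the fixed-point estimates.

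For the rate, the clean route is to compare \eqref{implicit_scheme} directly to the continuous solution in two stages. Stage one: introduce the auxiliary \emph{decoupled} implicit scheme obtained by freezing the forward coefficients along the true decoupling field, i.e. replacing $b(t_i,\cdot,\cdot,\cdot)$ and $\sigma(t_i,\cdot,\cdot)$ by $\tilde b(t_i,\cdot)$ and $\tilde\sigma(t_i,\cdot)$; this is a genuine decoupled BSDE discretization to which the classical $O(h)$ error estimates (e.g. the implicit/theta-scheme analyses of Zhang, Bender--Zhang, and in particular the monotone-scheme results of \cite{bender2008time}) apply, giving $\sup_t(\E\|X_t-\hat X_t^\pi\|^2+\E\|Y_t-\hat Y_t^\pi\|^2)+\int_0^T\E\|Z_t-\hat Z_t^\pi\|^2\,\mathrm dt\le C(1+\E\|x_0\|^2)h$ for the auxiliary scheme. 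Stage two: bound the difference between the true coupled scheme \eqref{implicit_scheme} and this auxiliary decoupled scheme. Here the only discrepancy is that \eqref{implicit_scheme} feeds the \emph{scheme's own} $\bar Y_{t_i}^\pi,\bar Z_{t_i}^\pi$ into $b$ and $\bar Y_{t_i}^\pi$ into $\sigma$, rather than $\nu(t_i,\cdot)$ and $\partial_x\nu\,\sigma$; using Assumption~\ref{assumption:lipschitz}, the Lipschitz property of $\nu$, and a discrete Grönwall argument on $\E\|\bar X_{t_i}^\pi-\hat X_{t_i}^\pi\|^2$, one propagates the $Y$- and $Z$-errors already controlled in stage one, closing the estimate with constants depending only on $\mathscr L$ and $T$. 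Adding the two stages via the triangle inequality delivers \eqref{eq:implicit_scheme:convergence:estimate}.

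The main obstacle I anticipate is Stage two's error propagation, specifically controlling the $\bar Z^\pi$ feedback into the drift $b$: unlike the $Y$-coupling, $\bar Z_{t_i}^\pi=\tfrac1h\E_i[\bar Y_{t_{i+1}}^\pi\Delta W_i^\top]$ is a difference-quotient-type object whose error is naturally measured only in the $L^2([0,T]\times\Omega)$ (time-integrated) sense, not pointwise in $i$, so a naive step-by-step Grönwall recursion does not immediately close. The fix is to keep the $Z$-error on the ``slow'' side of the inequality — i.e. absorb $\sum_i h\,\E\|\bar Z_{t_i}^\pi-\hat Z_{t_i}^\pi\|^2$ using the $O(h)$ bound from stage one and the $L^b_z$ Lipschitz constant, noting the extra factor $h$ from the drift term makes this term genuinely lower order — and then run Grönwall only on the $X$- and $Y$-errors. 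This is precisely the mechanism that the $Z$-coupling in $b$ is ``harmless at the discretization level'' (it costs a power of $h$), in contrast to the a~posteriori analysis of Section~\ref{section:convergence} where it does interact nontrivially; I would remark on this so the reader sees why Theorem~\ref{convergence_im} needs no new coupling condition beyond the imported monotonicity, whereas Theorem~\ref{thm:estimate2} does. A secondary technical point is verifying the time-regularity of $\nu$ needed for the Hölder-$\tfrac12$ estimates of $\tilde b,\tilde\sigma$; under Assumption~\ref{assumption:pde} (classical solution with bounded spatial derivatives) together with Assumption~\ref{assumption:holder} this follows from interior Schauder-type bounds, and I would simply cite the relevant PDE regularity rather than prove it.
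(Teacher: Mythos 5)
Your proposal attempts more than the paper itself does: the paper gives no proof of theorem \ref{convergence_im}, but imports it, stating in the remark that follows it that an estimate of the form \eqref{eq:implicit_scheme:convergence:estimate} can be obtained either by extending the weak-coupling/monotonicity analysis of \cite{bender2008time} to drifts depending on $Z$, or by invoking the implicit-scheme result of \cite{reisinger_posteriori_2024}; any discretization satisfying such a bound would serve in theorem \ref{thm:estimate2}. So the relevant question is whether your sketch is itself sound, and there is a genuine gap in Stage two.

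In Stage two the forward recursion of \eqref{implicit_scheme} is driven by the scheme's \emph{own} $\bar Y^\pi_{t_i},\bar Z^\pi_{t_i}$, so comparing with the auxiliary decoupled scheme forces you to control $\delta Y_i\coloneqq\bar Y^\pi_{t_i}-\hat Y^\pi_{t_i}$ and $\delta Z_i\coloneqq\bar Z^\pi_{t_i}-\hat Z^\pi_{t_i}$. These are inter-scheme differences, not the Stage-one errors $\hat Y^\pi-Y$, $\hat Z^\pi-Z$, and they are \emph{not} "already controlled in stage one": they obey their own backward recursion with terminal datum $g(\bar X^\pi_T)-g(\hat X^\pi_T)$ and driver differences involving $\delta X_i,\delta Y_i,\delta Z_i$. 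Hence the error system is two-way coupled, exactly as in lemmas \ref{lem:estimate1}--\ref{lem:estimate2}: the forward error feeds the backward error through $g$ and $f$, and the backward error feeds back through $b$ and $\sigma$. A one-directional Gr\"onwall on $\E[\|\delta X_i\|^2]$ therefore does not close; closing the loop is precisely where the weak-coupling/monotonicity/small-time conditions enter, which is why \cite{bender2008time} needs them beyond mere solvability and why the theorem imports them. Your sketch invokes the imported conditions only for existence and then claims constants "depending only on $\mathscr{L}$ and $T$", which would amount to convergence of the coupled scheme under Lipschitz assumptions alone -- stronger than what is known and false in general (it is the same mechanism that forces condition \eqref{thm: condition} in theorem \ref{thm:estimate2}). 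Likewise, the "extra factor $h$" fix for the $Z$-feedback addresses the wrong object: the term to absorb is $h\sum_i\E[\|\delta Z_i\|^2]$ with $\delta Z_i$ the inter-scheme difference, which is of the same order as the quantity being estimated, not an $O(h)$ input from Stage one. A correct completion would run the fully coupled stability estimate (an analogue of lemmas \ref{lem:estimate1}--\ref{lem:estimate2}, or the contraction argument of \cite{bender2008time} extended to $Z$-dependent drift) under the imported conditions -- which is exactly the route the paper's remark points to rather than carries out.
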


\begin{remark} We emphasize that the convergence of the discrete time approximation scheme in \eqref{implicit_scheme} is only a necessary ingredient in the last step of our main result in theorem \ref{thm:estimate2}. The objective of the present paper is to prove an a posteriori estimate -- see \eqref{eq:main_theorem:estimate} below -- given a deep BSDE approximation, and not to analyze the convergence of an abstract time discretization. In fact, regardless of the assumptions under which theorem \ref{convergence_im} is stated, as long as the implicit scheme admits an estimate such as \eqref{eq:implicit_scheme:convergence:estimate}, the conclusions of our main result remain the same. Such an estimate can be established by several different approaches in the literature. For instance, it can be shown that the weak and monotonicity conditions in \cite{bender2008time} can be extended to our setting where the drift function $b$ has an extra argument $Z$. Alternatively, one could employ the convergence result of Reisinger et al. in \cite{reisinger_posteriori_2024} for the implicit Euler scheme in the framework of McKean-Vlasov FBSDEs, which would lead to a different set of monotonicity assumptions. For an overview on the literature of time discretization results, we refer to \cite{zhang2017backward}.
\end{remark}

Recall the classical Euler scheme in \eqref{eq:euler}. Taking conditional expectations of the discrete equation of $Y_{t_{i+1}}^\pi$, and of the same equation multiplied by $(\Delta W_i)^\top$, we obtain a formulation that does not include the objective functional \eqref{euler_objective}, i.e.,
\begin{equation}\label{formulation3}
\left\{
\begin{aligned}
X_0^\pi &= x_0, \\
X_{t_{i+1}}^\pi &=X_{t_i}^\pi + b\left(t_i, X_{t_i}^\pi, Y_{t_i}^\pi, Z_{t_i}^\pi \right) h+\sigma\left(t_i, X_{t_i}^\pi, Y_{t_i}^\pi\right) \Delta W_i, \\
Z_{t_i}^\pi &= \frac{1}{h} \mathbb{E}_i \left[ Y_{t_{i+1}}^\pi \Delta W_i^\top  \right], \\
Y_{t_i}^\pi &= \mathbb{E}_i \left[ Y_{t_{i+1}}^\pi + f\left(t_i, X_{t_i}^\pi, Y_{t_i}^\pi,  Z_{t_i}^\pi\right) h  \right] .
\end{aligned}\right.
\end{equation}
With formulation \eqref{formulation3} in hand, we can derive the following apriori estimate bounding the difference between two solutions of it.
\begin{lemma}\label{lem:estimate1} 
For $j=1,2$, suppose $\left(\left\{X_{t_i}^{\pi, j}\right\}_{0 \leq i \leq N},\left\{Y_{t_i}^{\pi, j}\right\}_{0 \leq i \leq N},\left\{Z_{t_i}^{\pi, j}\right\}_{0 \leq i \leq N-1}\right)$ are two solutions of \eqref{formulation3}, with $X_{t_i}^{\pi, j}, Y_{t_i}^{\pi, j} \in L^2\left(\Omega, \mathcal{F}_{t_i}, \mathbb{P}\right), 0 \leq i \leq N$. For any $\lambda_1 > 0, \lambda_2 > L^f_z$, and sufficiently small $h$, denote
\begin{align}\label{def:K1,K2,K3,K4,C1}
    K_1 &\coloneqq 2 k^b+\lambda_1+L^\sigma_x+ L^b_x h,\quad K_2 \coloneqq \left(\lambda_1^{-1}+h\right) L^b_y + L^\sigma_y,\quad 
    K_3 \coloneqq -\frac{\ln \left( 1-\left(2 k^f+\lambda_2\right) h\right) }{h},\\
    K_4 &\coloneqq \frac{L^f_x}{\left( 1-\left(2 k^f+\lambda_2\right) h\right) \lambda_2},\quad C_1 \coloneqq L^b_z ( h + \lambda_1^{-1}).
\end{align}
Let $\delta X_i\coloneqq X_{t_i}^{\pi, 1}-X_{t_i}^{\pi, 2}$, $\delta Y_i\coloneqq Y_{t_i}^{\pi, 1}-Y_{t_i}^{\pi, 2}$, $\delta Z_i\coloneqq Z_{t_i}^{\pi, 1}-Z_{t_i}^{\pi, 2}$, then we have, for $0 \leq n \leq N$
\begin{align}
    \mathbb{E} \left[ \left\|\delta X_n  \right\|^2 \right]  &\leq  K_2 h \sum_{i=0}^{n-1} e^{K_1(n-i-1) h} \mathbb{E} \left[ \left\| \delta Y_i \right\|^2 \right]  + C_1 h \sum_{i=0}^{n-1} e^{K_1(n-i-1) h} \mathbb{E} \left[ \left\|\delta Z_i\right\|^2 \right],\label{eq:lemma1:x}\\
\mathbb{E} \left[ \left\| \delta Y_i \right\|^2 \right] &\leq e^{K_3(N-n) h} \mathbb{E} \left[ \left\| \delta Y_N \right\|^2 \right] + K_4 \sum_{i=n}^{N-1} e^{K_3(i-n) h} \mathbb{E} \left[ \left\| \delta X_i \right\|^2 \right] h .\label{eq:lemma1:y}
\end{align}
\end{lemma}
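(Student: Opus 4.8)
The plan is to follow the apriori strategy of \cite{hanlong2020}: obtain the forward bound \eqref{eq:lemma1:x} and the backward bound \eqref{eq:lemma1:y} as two separate one-step recursions in the index $i$ and then iterate them. The genuinely new feature relative to \cite{hanlong2020} is that the $\delta Z$-error now feeds into the forward dynamics of $\delta X$ and must be retained; this is what forces the extra constant $C_1$.

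\emph{Forward bound.} I would subtract the $X$-lines of \eqref{formulation3} for $j=1,2$ to get $\delta X_{i+1}=\delta X_i+(\Delta b_i)h+(\Delta\sigma_i)\Delta W_i$, where $\Delta b_i:=b(t_i,X^{\pi,1}_{t_i},Y^{\pi,1}_{t_i},Z^{\pi,1}_{t_i})-b(t_i,X^{\pi,2}_{t_i},Y^{\pi,2}_{t_i},Z^{\pi,2}_{t_i})$ and $\Delta\sigma_i$ is defined analogously. Expanding $\|\delta X_{i+1}\|^2$ and taking $\mathbb{E}$: since $\delta X_i,\Delta b_i,\Delta\sigma_i$ are $\mathcal{F}_{t_i}$-measurable while $\Delta W_i$ is centered and independent of $\mathcal{F}_{t_i}$, all mixed Brownian terms vanish and $\mathbb{E}\|(\Delta\sigma_i)\Delta W_i\|^2=h\,\mathbb{E}\|\Delta\sigma_i\|^2$, leaving the cross term $2h\,\mathbb{E}\langle\delta X_i,\Delta b_i\rangle$. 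Splitting $\Delta b_i$ into an $x$-increment at frozen $(Y^{\pi,1}_{t_i},Z^{\pi,1}_{t_i})$ — controlled by the monotonicity constant $k^b$ of Assumption \ref{assumption:kb_kf} — and a $(y,z)$-increment at frozen $X^{\pi,2}_{t_i}$ — controlled by Young's inequality with weight $\lambda_1$ together with Assumption \ref{assumption:lipschitz} — and bounding $\|\Delta b_i\|^2$, $\|\Delta\sigma_i\|^2$ directly by Assumption \ref{assumption:lipschitz}, the coefficients of $\|\delta X_i\|^2$, $\|\delta Y_i\|^2$, $\|\delta Z_i\|^2$ collapse exactly to $K_1,K_2,C_1$, giving $\mathbb{E}\|\delta X_{i+1}\|^2\le (1+K_1h)\mathbb{E}\|\delta X_i\|^2+K_2h\,\mathbb{E}\|\delta Y_i\|^2+C_1h\,\mathbb{E}\|\delta Z_i\|^2$. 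Using $\delta X_0=0$, that $1+K_1h\ge 0$ for $h$ small, and $1+K_1h\le e^{K_1h}$, iterating this recursion yields \eqref{eq:lemma1:x}.

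\emph{Backward bound.} From the last two lines of \eqref{formulation3} one gets $\delta Y_i=\mathbb{E}_i[\delta Y_{i+1}]+h\,\Delta f_i$ and $\delta Z_i=\tfrac1h\mathbb{E}_i\!\left[(\delta Y_{i+1}-\mathbb{E}_i[\delta Y_{i+1}])\Delta W_i^\top\right]$, with $\Delta f_i$ the corresponding increment of $f$. The key auxiliary facts are the discrete Pythagoras identity $\mathbb{E}_i\|\delta Y_{i+1}\|^2=\|\mathbb{E}_i[\delta Y_{i+1}]\|^2+\mathbb{E}_i\|\delta Y_{i+1}-\mathbb{E}_i[\delta Y_{i+1}]\|^2$ and the estimate $h\|\delta Z_i\|^2\le\mathbb{E}_i\|\delta Y_{i+1}-\mathbb{E}_i[\delta Y_{i+1}]\|^2$, obtained by projecting the martingale increment onto the span of the components of $\Delta W_i$. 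Together with $\|\mathbb{E}_i[\delta Y_{i+1}]\|^2=\|\delta Y_i-h\Delta f_i\|^2\ge\|\delta Y_i\|^2-2h\langle\delta Y_i,\Delta f_i\rangle$, these give $\|\delta Y_i\|^2\le\mathbb{E}_i\|\delta Y_{i+1}\|^2+2h\langle\delta Y_i,\Delta f_i\rangle-h\|\delta Z_i\|^2$. I would then decompose $\Delta f_i$ into an $(x,z)$-increment at frozen $Y^{\pi,1}_{t_i}$ and a $y$-increment at frozen $(X^{\pi,2}_{t_i},Z^{\pi,2}_{t_i})$, bounding the latter by $k^f$ (Assumption \ref{assumption:kb_kf}) and the former by Young's inequality with weight $\lambda_2$ and Assumption \ref{assumption:lipschitz}; this produces $(2k^f+\lambda_2)h\|\delta Y_i\|^2+\lambda_2^{-1}L^f_xh\|\delta X_i\|^2+\lambda_2^{-1}L^f_zh\|\delta Z_i\|^2$, and because $\lambda_2>L^f_z$ the last term is absorbed by $-h\|\delta Z_i\|^2$ and discarded. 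Taking $\mathbb{E}$, moving the $\|\delta Y_i\|^2$ term to the left and dividing by $1-(2k^f+\lambda_2)h>0$ (valid for $h$ small) gives $\mathbb{E}\|\delta Y_i\|^2\le e^{K_3h}\mathbb{E}\|\delta Y_{i+1}\|^2+K_4h\,\mathbb{E}\|\delta X_i\|^2$ with precisely the stated $K_3,K_4$ (using $e^{K_3h}=1/(1-(2k^f+\lambda_2)h)$); iterating backward from $i=n$ up to $N-1$ yields \eqref{eq:lemma1:y}.

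I expect the main obstacle to be the backward bound: one must keep the negative term $-h\|\delta Z_i\|^2$ that arises from the implicit/projection structure of \eqref{formulation3}, and must telescope $\Delta f_i$ so that the $\|\delta Z_i\|^2$ generated by the $z$-Lipschitz part of $f$ is cancelled exactly — this is precisely where the hypothesis $\lambda_2>L^f_z$ is used, and it is the delicate bookkeeping point. The forward bound is then a routine discrete Gronwall iteration; its only departure from \cite{hanlong2020} is that $\|\delta Z_i\|^2$ now appears on the right-hand side, carried through $C_1=L^b_z(h+\lambda_1^{-1})$, which degenerates to $0$ in the $L^b_z=0$ setting treated there.
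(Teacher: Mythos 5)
Your proposal is correct, and the forward bound is argued exactly as in the paper (split the cross term via $k^b$ and a Young/RMS--GM inequality with weight $\lambda_1$, collect $K_1,K_2,C_1$, then discrete Gronwall with $1+K_1h\le e^{K_1h}$). The backward bound, however, takes a genuinely different route for the key step of producing the negative $-h\|\delta Z_i\|^2$ term. The paper invokes the martingale representation theorem to write $\delta Y_{i+1}=\delta Y_i-\delta f_i h+\int_{t_i}^{t_{i+1}}\delta Z_t\,\mathrm{d}W_t$, obtains the exact identity $\mathbb{E}\|\delta Y_{i+1}\|^2=\mathbb{E}\|\delta Y_i-\delta f_i h\|^2+\int_{t_i}^{t_{i+1}}\mathbb{E}\|\delta Z_t\|^2\mathrm{d}t$, and then bounds $h\,\mathbb{E}\|\delta Z_i\|^2\le\int_{t_i}^{t_{i+1}}\mathbb{E}\|\delta Z_t\|^2\mathrm{d}t$ via Itô isometry, Jensen and Fubini; you instead stay entirely at the discrete level, using the conditional-variance (Pythagoras) identity for $\delta Y_{i+1}$ together with the Bessel-type projection bound $h\|\delta Z_i\|^2\le\mathbb{E}_i\bigl[\|\delta Y_{i+1}-\mathbb{E}_i[\delta Y_{i+1}]\|^2\bigr]$, which is valid since the components of $\Delta W_i$ are conditionally orthogonal with conditional second moment $h$ (and is in fact sharper than the Cauchy--Schwarz variant with the extra factor $m$ used in the paper's lemma \ref{lem:estimate2}). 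Both routes land on the same one-step recursion, the same absorption of the $L^f_z$ term via $\lambda_2>L^f_z$, and the same backward iteration with $e^{K_3h}=(1-(2k^f+\lambda_2)h)^{-1}$ and $K_4$; your telescoping of $\delta f_i$ freezes $Y^{\pi,1}$ rather than $Y^{\pi,2}$, which is immaterial. What the paper's continuous-time detour buys is the intermediate inequality \eqref{deltaP_2}, retained with the $(1-L^f_z\lambda_2^{-1})\mathbb{E}\|\delta Z_i\|^2h$ term, which is reused later in the proof of theorem \ref{thm:estimate2} to control $\sum_i\mathbb{E}\|\delta Z_i\|^2h$ (estimate \eqref{estimate_Q}); your argument also yields that inequality if you simply refrain from discarding the $\delta Z_i$ term before taking expectations, so nothing essential is lost, and your version avoids continuous-time machinery altogether.
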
 

\begin{proof} Let us define
\begin{align}
    \delta b_i & \coloneqq b\left(t_i, X_{t_i}^{\pi, 1}, Y_{t_i}^{\pi, 1}, Z_{t_i}^{\pi, 1} \right) - b\big(t_i, X_{t_i}^{\pi, 2}, Y_{t_i}^{\pi, 2}, Z_{t_i}^{\pi, 2} \big), \delta \sigma_i \coloneqq \sigma\left(t_i, X_{t_i}^{\pi, 1}, Y_{t_i}^{\pi, 1}\right)-\sigma\left(t_i, X_{t_i}^{\pi, 2}, Y_{t_i}^{\pi, 2}\right), \\
    \delta f_i &\coloneqq f\left(t_i, X_{t_i}^{\pi, 1}, Y_{t_i}^{\pi, 1}, Z_{t_i}^{\pi, 1}\right) - f\left(t_i, X_{t_i}^{\pi, 2}, Y_{t_i}^{\pi, 2}, Z_{t_i}^{\pi, 2}\right).
\end{align}
Then we have
\begin{align}
    \delta X_{i+1} &= \delta X_i + \delta b_i h + \delta \sigma_i \Delta W_i,\label{deltaX}\\
    \delta Y_i &= \mathbb{E}_i \left[ \delta Y_{i+1}+\delta f_i h \right],\label{deltaP}
\end{align}
and \eqref{formulation3} also gives
\begin{equation}\label{deltaQ}
\delta Z_i = \frac{1}{h} \mathbb{E}_i \left[ \delta Y_{i+1} \Delta W_i^\top \right].
\end{equation}
By the martingale representation theorem, there exists an $\mathcal{F}_t$-adapted square-integrable process $\left\{\delta Z_t\right\}_{t_i \leq t \leq t_{i+1}}$ such that
\begin{equation}\label{martingale_P1}
\delta Y_{i+1} = \mathbb{E}_i \left[  \delta Y_{i+1} \right] +  \int_{t_i}^{t_{i+1}} \delta Z_t \mathrm{d}W_t,
\end{equation}
which, together with \eqref{deltaP}, implies
\begin{equation}\label{martingale_P2}
\delta Y_{i+1} = \delta Y_i-\delta f_i h + \int_{t_i}^{t_{i+1}} \delta Z_t \mathrm{d}W_t.
\end{equation}
From \eqref{deltaX} and \eqref{martingale_P2}, noting that $\delta X_i, \delta Y_i, \delta b_i, \delta \sigma_i$ and $\delta f_i$ are all $\mathcal{F}_{t_i}$ measurable, and $\mathbb{E}_i [\Delta W_i ]=0$, $\mathbb{E}_i [ \int_{t_i}^{t_{i+1}} Z_t \mathrm{d}W_t ]=0$, we have
\begin{align}
    \mathbb{E} \left[ \left\| \delta X_{i+1} \right\|^2\right] 
& = \mathbb{E} \left[ \left\| \delta X_i+\delta b_i h\right\|^2 \right] + h \mathbb{E} \left[ \left\|\delta \sigma_i\right\|^2 \right],\\
    \mathbb{E} \left[ \left\| \delta Y_{i+1} \right\|^2 \right] & = \mathbb{E} \left[ \left\| \delta Y_i-\delta f_i h\right\|^2 \right] + \int_{t_i}^{t_{i+1}} \mathbb{E} \left[ \left\| \delta Z_t\right\|^2 \right] \mathrm{d}t,
\end{align}
where we also used a Fubini argument. We proceed in steps, controlling each of the terms above.
\begin{steps}[wide, labelwidth=0pt, labelindent=0pt]
\item \label{lemma2:step:2}\textit{Estimate for $\delta X_n$.}
By assumptions \ref{assumption:kb_kf}, \ref{assumption:lipschitz}, and the root-mean-square and geometric mean inequality (RMS-GM inequality), we have that for any $\lambda_1>0$
\begin{equation}
\mathbb{E} \left[ \left\| \delta X_{i+1} \right\|^2 \right] \begin{aligned}[t]
  &=\begin{aligned}[t]
      & \mathbb{E} \left[\left\|\delta X_i\right\|^2\right] + \mathbb{E} \left[\left\|\delta b_i\right\|^2\right] h^2 + h \mathbb{E} \left[\left\|\delta \sigma_i\right\|^2\right] \\
& +2 h \mathbb{E} \left[\left(b\left(t_i, X_{t_i}^{\pi, 1}, Y_{t_i}^{\pi, 1}, Z_{t_i}^{\pi, 1} \right) - b\left(t_i, X_{t_i}^{\pi, 2}, Y_{t_i}^{\pi, 1}, Z_{t_i}^{\pi, 1} \right)\right)^\top \delta X_i\right] \\
& +2 h \mathbb{E} \left[\left(b\left(t_i, X_{t_i}^{\pi, 2}, Y_{t_i}^{\pi, 1}, Z_{t_i}^{\pi, 1} \right)-b\left(t_i, X_{t_i}^{\pi, 2}, Y_{t_i}^{\pi, 2}, Z_{t_i}^{\pi, 2} \right)\right)^\top \delta X_i\right] \\
  \end{aligned}\\
    &\leq\begin{aligned}[t]
    &\left(1+\left(2 k^b+\lambda_1+L^\sigma_x + L^b_x h\right) h\right) \mathbb{E}\left[\left\|\delta X_i\right\|^2\right] \\ &+ \left(\left(\lambda_1^{-1}+h\right) L^b_y+ L^\sigma_y\right) \mathbb{E}\left[\left\|\delta Y_i\right\|^2\right] h + \left( L^b_z h + \lambda_1^{-1} L^b_z \right) \mathbb{E} \left[\left\|\delta Z_i\right\|^2\right] h.
\end{aligned}  
\end{aligned}
\end{equation}
Recalling the definition of $C_1, K_1, K_2$ from \eqref{def:K1,K2,K3,K4,C1}, we subsequently gather
\begin{equation}
\begin{aligned}
\mathbb{E} \left[\left\| \delta X_{i+1} \right\|^2\right] 
\leq (1+K_1 h) \mathbb{E} \left[\left\|\delta X_i\right\|^2\right] + K_2 h \mathbb{E} \left[ \left\|\delta Y_i\right\|^2 \right] + C_1 h \mathbb{E} \left[\left\|\delta Z_{i}\right\|^2\right] .
\end{aligned}
\end{equation}
Notice that $\mathbb{E} \left[\left\|\delta X_0\right\|^2 \right]=0 $, and thus by induction, we have that for any $1 \leq n \leq N$,
\begin{equation}
\begin{aligned}
\mathbb{E} \left[ \left\|\delta X_n\right\|^2 \right]
\leq & \prod_{i=0}^{n-1} (1+K_1 h) \mathbb{E} \left[ \left\|\delta X_0\right\|^2\right] + \sum_{i=0}^{n-1} (1+K_1 h)^{n-1-i} K_2 \mathbb{E} \left[\left\|\delta Y_i\right\|^2\right] h  \\
& + \sum_{i=0}^{n-1} (1+K_1 h)^{n-1-i} C_1 \mathbb{E} \left[\left\|\delta Z_i\right\|^2\right] h \\
\leq & K_2 h \sum_{i=0}^{n-1} e^{K_1(n-i-1) h} \mathbb{E} \left[\left\|\delta Y_i\right\|^2 \right] + C_1 h \sum_{i=0}^{n-1} e^{K_1(n-i-1) h} \mathbb{E} \left[ \left\|\delta Z_i\right\|^2\right],
\end{aligned}
\end{equation}
where we used the inequality $(1+x)\leq e^x$, $\forall x\in\R$.
We remark that due to the coupling of $Z$ in the drift, the last term of the right-hand side above is not present in \cite{hanlong2020}.

\item \label{lemma2:step:1}\textit{Estimate for $\delta Y_n$.}
We employ a similar approach as in step 1. Using assumption \ref{assumption:lipschitz} and the RMS-GM inequality, we obtain for any $\lambda_2>0$, 
\begin{align}\label{lemma2:delta_p_i}
     \mathbb{E} \left[ \left\|\delta Y_{i+1}\right\|^2 \right]\begin{aligned}[t]
&\geq\begin{aligned}[t]
    & \mathbb{E} \left[ \left\|\delta Y_i\right\|^2\right] + \int_{t_i}^{t_{i+1}} \mathbb{E} \left[ \left\|\delta Z_t\right\|^2\right] \mathrm{d}t \\
& -2 h \mathbb{E} \left[\left( f\left(t_i, X_i^{1, \pi}, Y_i^{1, \pi}, Z_i^{1, \pi}\right)-f \left(t_i, X_i^{1, \pi}, Y_i^{2, \pi}, Z_i^{1, \pi}\right)\right)^\top \delta Y_i\right] \\
& -2 h \mathbb{E} \left[\left( f\left(t_i, X_i^{1, \pi}, Y_i^{2, \pi}, Z_i^{1, \pi}\right)-f \left(t_i, X_i^{2, \pi}, Y_i^{2, \pi}, Z_i^{2, \pi}\right)\right)^\top \delta Y_i\right] 
\end{aligned}\\
&\geq  \begin{aligned}[t]
    & \mathbb{E} \left[ \left\|\delta Y_i\right\|^2 \right] + \int_{t_i}^{t_{i+1}} \mathbb{E} \left[ \left\|\delta Z_t\right\|^2\right] \mathrm{d}t  -  2 k^f h \mathbb{E} \left[ \left\|\delta Y_i\right\|^2\right] \\
& - \left( \lambda_2  \mathbb{E} \left[ \left\|\delta Y_i\right\|^2 \right] + \lambda_2^{-1} \left( L^f_x \mathbb{E} \left[ \left\|\delta X_i\right\|^2\right] + L^f_z \mathbb{E}\left[ \left\|\delta Z_i\right\|^2\right] \right)\right) h.
\end{aligned}
\end{aligned}
\end{align}
To deal with the integral term in the last inequality, we derive the following relation via Ito's isometry, \eqref{martingale_P2} and \eqref{deltaQ}
\begin{equation}
\delta Z_i = \frac{1}{h} \mathbb{E}_i \left[ \int_{t_i}^{t_{i+1}} \delta Z_t \mathrm{d}t \right] .
\end{equation}
Then, by the Jensen- and Cauchy-Schwartz inequalities and the Fubini theorem, we derive a lower bound for the integral term
\begin{equation}\label{lemma2:integral_term}
\begin{aligned}
\mathbb{E} \left[ \left\| \delta Z_i\right\|^2\right] h 
& = \sum_{j=1}^q \sum_{k=1}^m \mathbb{E}\left[ \left(\delta Z_i\right)_{j,k}^2 \right] h = \sum_{j=1}^q \sum_{k=1}^m \frac{1}{h} \mathbb{E} \left[\left( \mathbb{E}_i\left[\int_{t_i}^{t_{i+1}} \left(\delta Z_t\right)_{j,k}  \mathrm{d}t \right] \right)^2  \right]\\
& \leq \sum_{j=1}^q \sum_{k=1}^m \frac{1}{h} \mathbb{E}\left[\left(\int_{t_i}^{t_{i+1}} \left(\delta Z_t\right)_{j,k} \mathrm{d}t \right)^2\right] \\
& \leq \sum_{j=1}^q \sum_{k=1}^m \int_{t_i}^{t_{i+1}} \mathbb{E}\left[ \left(\delta Z_t\right)_{j,k}^2 \right] \mathrm{d}t =\int_{t_i}^{t_{i+1}} \mathbb{E}\left[\left\|\delta Z_t\right\|^2\right] \mathrm{d}t,
\end{aligned}
\end{equation}
where $(\cdot)_{j,k}$ denotes the $(j,k)$-entry of the matrix. 
Combining \eqref{lemma2:delta_p_i} with \eqref{lemma2:integral_term} gives
\begin{equation}\label{deltaP_2}
\mathbb{E} \left[ \left\|\delta Y_{i+1}\right\|^2 \right]
\geq \begin{aligned}[t]&\left(1-\left(2 k^f +\lambda_2\right) h\right) \mathbb{E} \left[ \left\|\delta Y_i\right\|^2 \right] + \left(1 - L^f_z \lambda_2^{-1}\right) \mathbb{E} \left[\left\|\delta Z_i\right\|^2 \right] h\\
& - L^f_x \lambda_2^{-1} \mathbb{E} \left[\left\|\delta X_i\right\|^2\right] h .
\end{aligned}
\end{equation}
Now, for any $\lambda_2 > L^f_z \geq 0$, and sufficiently small $h$ satisfying $ \left(2 k^f+\lambda_2\right) h<1$, this implies
\begin{equation}
\mathbb{E} \left[ \left\|\delta Y_i\right\|^2 \right]
\leq \left(1-\left(2 k^f +\lambda_2\right) h\right)^{-1} \left(\mathbb{E} \left[ \left\|\delta Y_{i+1}\right\|^2\right] + L^f_x \lambda_2^{-1} \mathbb{E} \left[\left\|\delta X_i\right\|^2\right] h\right) .
\end{equation}
Recalling the definitions of $K_3, K_4$ in \eqref{def:K1,K2,K3,K4,C1}, we subsequently gather by induction that for any $0 \leq n \leq N-1$
\begin{equation}\label{eq:lemma2:delta_pn}
\mathbb{E} \left[\left\|\delta Y_n\right\|^2\right] \leq e^{K_3(N-n) h} \mathbb{E} \left[ \left\|\delta Y_N\right\|^2\right] + K_4 \sum_{i=n}^{N-1} e^{K_3(i-n) h} \mathbb{E} \left[\left\|\delta X_i\right\|^2\right] h.
\end{equation}
We remark that this estimate coincides with the one of \cite[Lemma 1]{hanlong2020}.
\end{steps}
\end{proof}
Due to the coupling of $Z$ in the drift coefficient of the forward diffusion, we need an additional estimate to handle the extra $\mathbb{E}[\|\delta Z_i\|^2]$ term in the estimate for $\mathbb{E}[\|\delta X_i\|^2]$. One of our main contributions is to establish the following lemma for this purpose. 
\begin{lemma}\label{lem:estimate2} Under the setting of lemma \ref{lem:estimate1}, for any $\lambda_3> 2m L^f_z$ and sufficiently small $h$, let us define 
\begin{align}\label{def:C2,C3,C4}
    C_2 \coloneqq 2( (h+\lambda_3^{-1}) L^f_y + \lambda_3),\quad C_3 \coloneqq  2 (h+\lambda_3^{-1} ),\quad
C_4 \coloneqq ( 1- mC_3 L^f_z )^{-1} m.
\end{align}
Then we have $C_4>0$, furthermore, the following estimates also hold
\begin{align}
    \mathbb{E} \left[ \|\delta Y_i\|^2 \right]& \leq  (1+ C_2 h)  \mathbb{E} \left[ \left\| \mathbb{E}_i \left[\delta Y_{i+1} \right] \right\|^2 \right]  + C_3 h L^f_x  \mathbb{E}\left[\|\delta X_i\|^2\right]  + C_3  h L^f_z \mathbb{E}\left[\|\delta Z_i\|^2\right], \label{eq:lemma2:y}\\
    h \sum_{i=0}^{N-1} \mathbb{E}[\|\delta Z_i\|^2]
& \leq C_4 \left( \sum_{i=1}^{N-1} C_2 h \mathbb{E} \left[ \left\| \mathbb{E}_i \left[\delta Y_{i+1} \right]\right\|^2 \right] + C_3 h L^f_x \mathbb{E}\left[\|\delta X_i\|^2\right] \right) + C_4 \mathbb{E}\left[\|\delta Y_N\|^2\right] . \label{eq:lemma2:z}
\end{align}
\end{lemma}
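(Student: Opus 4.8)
The plan is to establish the two bounds in sequence, both built on the elementary one-step identity $\delta Y_i=\mathbb{E}_i[\delta Y_{i+1}]+h\,\delta f_i$, which is just \eqref{deltaP} rewritten using that $\delta f_i$ is $\mathcal{F}_{t_i}$-measurable. The second bound \eqref{eq:lemma2:z} then follows by feeding \eqref{eq:lemma2:y} into a telescoping argument for the $Z$-increments, after first controlling the $Z$-energy by the ``martingale defect'' of $\delta Y$.

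For \eqref{eq:lemma2:y} I would square the identity, $\|\delta Y_i\|^2=\|\mathbb{E}_i[\delta Y_{i+1}]\|^2+2h\langle \mathbb{E}_i[\delta Y_{i+1}],\delta f_i\rangle+h^2\|\delta f_i\|^2$, split the cross term by the RMS-GM inequality with weight $\lambda_3>0$ to obtain $\|\delta Y_i\|^2\le(1+\lambda_3 h)\|\mathbb{E}_i[\delta Y_{i+1}]\|^2+h(h+\lambda_3^{-1})\|\delta f_i\|^2$, take expectations, and bound $\mathbb{E}[\|\delta f_i\|^2]\le L^f_x\mathbb{E}[\|\delta X_i\|^2]+L^f_y\mathbb{E}[\|\delta Y_i\|^2]+L^f_z\mathbb{E}[\|\delta Z_i\|^2]$ via Assumption \ref{assumption:lipschitz}. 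Moving the $L^f_y$-term to the left, dividing by $1-h(h+\lambda_3^{-1})L^f_y$ (legitimate once $h$ is small enough that this factor exceeds $1/2$) and using $(1-x)^{-1}\le 1+2x$ absorbs the resulting $O(h^2)$ corrections into the leading-order constants and delivers \eqref{eq:lemma2:y} with $C_2=2((h+\lambda_3^{-1})L^f_y+\lambda_3)$ and $C_3=2(h+\lambda_3^{-1})$.

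For \eqref{eq:lemma2:z} the one genuinely new ingredient is the $Z$-energy estimate. Subtracting the vanishing $\mathcal{F}_{t_i}$-measurable term $\mathbb{E}_i[\delta Y_{i+1}]\,\mathbb{E}_i[\Delta W_i]^\top$ from \eqref{deltaQ} gives $h\,\delta Z_i=\mathbb{E}_i\big[(\delta Y_{i+1}-\mathbb{E}_i[\delta Y_{i+1}])\Delta W_i^\top\big]$; applying the conditional Cauchy--Schwarz inequality entrywise and using $\mathbb{E}_i[(\Delta W_i)_k^2]=h$ yields $h^2\|\delta Z_i\|^2\le m\,h\,\mathbb{E}_i[\|\delta Y_{i+1}-\mathbb{E}_i[\delta Y_{i+1}]\|^2]$, hence $h\,\mathbb{E}[\|\delta Z_i\|^2]\le m\big(\mathbb{E}[\|\delta Y_{i+1}\|^2]-\mathbb{E}[\|\mathbb{E}_i[\delta Y_{i+1}]\|^2]\big)$ after taking expectations — this is where the factor $m$ in $C_4$ enters (one could alternatively route this through the martingale representation \eqref{martingale_P1} and the Jensen step already used in Lemma \ref{lem:estimate1}). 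Summing over $i=0,\dots,N-1$ and reorganizing, the right-hand side equals $m\big(\mathbb{E}[\|\delta Y_N\|^2]-\mathbb{E}[\|\mathbb{E}_0[\delta Y_1]\|^2]\big)+m\sum_{i=1}^{N-1}\big(\mathbb{E}[\|\delta Y_i\|^2]-\mathbb{E}[\|\mathbb{E}_i[\delta Y_{i+1}]\|^2]\big)$; I would drop the negative term $-\mathbb{E}[\|\mathbb{E}_0[\delta Y_1]\|^2]$ and plug in \eqref{eq:lemma2:y} in the form $\mathbb{E}[\|\delta Y_i\|^2]-\mathbb{E}[\|\mathbb{E}_i[\delta Y_{i+1}]\|^2]\le C_2 h\,\mathbb{E}[\|\mathbb{E}_i[\delta Y_{i+1}]\|^2]+C_3 h L^f_x\mathbb{E}[\|\delta X_i\|^2]+C_3 h L^f_z\mathbb{E}[\|\delta Z_i\|^2]$. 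Writing $S:=h\sum_{i=0}^{N-1}\mathbb{E}[\|\delta Z_i\|^2]$ and bounding $\sum_{i=1}^{N-1}\mathbb{E}[\|\delta Z_i\|^2]\le\sum_{i=0}^{N-1}\mathbb{E}[\|\delta Z_i\|^2]$, this produces $S\le m\,\mathbb{E}[\|\delta Y_N\|^2]+m\sum_{i=1}^{N-1}\big(C_2 h\,\mathbb{E}[\|\mathbb{E}_i[\delta Y_{i+1}]\|^2]+C_3 h L^f_x\mathbb{E}[\|\delta X_i\|^2]\big)+mC_3 L^f_z\,S$. Finally, $\lambda_3>2mL^f_z$ forces $2m\lambda_3^{-1}L^f_z<1$, so for $h$ small $mC_3L^f_z=2m(h+\lambda_3^{-1})L^f_z<1$, whence $C_4=(1-mC_3L^f_z)^{-1}m>0$; collecting the $S$-terms on the left and dividing by $1-mC_3L^f_z$ yields \eqref{eq:lemma2:z}.

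The main obstacle is bookkeeping rather than conceptual: making the ``sufficiently small $h$'' thresholds explicit and verifying that every $O(h^2)$ remainder can be swallowed into the stated constants $C_2,C_3,C_4$ (e.g.\ $(1+hC_3L^f_y)\tfrac12\le1$ and $(1+hC_3L^f_y)(1+\lambda_3 h)\le1+C_2h$). The only step that is new relative to \cite{hanlong2020} is the entrywise conditional Cauchy--Schwarz estimate for $h\,\delta Z_i$, which is precisely what allows the extra $Z$-coupling to be controlled by the martingale defect $\mathbb{E}[\|\delta Y_{i+1}\|^2]-\mathbb{E}[\|\mathbb{E}_i[\delta Y_{i+1}]\|^2]$ and hence, via \eqref{eq:lemma2:y}, by $\mathbb{E}[\|\mathbb{E}_i[\delta Y_{i+1}]\|^2]$, $\mathbb{E}[\|\delta X_i\|^2]$ and the terminal mismatch $\mathbb{E}[\|\delta Y_N\|^2]$.
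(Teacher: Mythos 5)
Your proposal is correct and matches the paper's own argument essentially step for step: the same Young/RMS-GM split of $\delta Y_i=\mathbb{E}_i[\delta Y_{i+1}]+h\,\delta f_i$ with rearrangement in the $L^f_y$-term for \eqref{eq:lemma2:y}, and the same centering of \eqref{deltaQ}, conditional Cauchy--Schwarz (your entrywise version is equivalent to the paper's Frobenius-norm version and yields the same factor $m$), telescoping via the martingale defect, insertion of \eqref{eq:lemma2:y}, and absorption of the $mC_3L^f_z$-term using $\lambda_3>2mL^f_z$ for \eqref{eq:lemma2:z}. The only differences are cosmetic bookkeeping of the ``sufficiently small $h$'' thresholds, which you handle correctly.
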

\begin{proof}
We take the squares of both sides of \eqref{deltaP} and use the $\epsilon$-Young inequality to get
\begin{equation}
\left\|\delta Y_i\right\|^2 
\leq(1+\lambda_3 h) \left\|\mathbb{E}_i\left[\delta Y_{i+1}\right]\right\|^2 + (1+(\lambda_3 h)^{-1} ) \left\| h \delta f_i\right\|^2,
\end{equation}
which holds for any $\lambda_3>0$, independent of $h>0$.
Taking expectations on both sides and using the Lipschitz continuity of $f$ established by assumption \ref{assumption:lipschitz} yields
\begin{equation}
\begin{aligned}
\mathbb{E} \left[ \left\|\delta Y_i\right\|^2 \right]
& \leq (1+\lambda_3 h) \mathbb{E}\left[\left\|\mathbb{E}_i\left[\delta Y_{i+1}\right]\right\|^2\right] + \left( h + \lambda_3^{-1} \right) h \mathbb{E}\left[\left\|\delta f_i\right\|^2\right]  \\
& \leq (1+\lambda_3 h) \mathbb{E}\left[\left\|\mathbb{E}_i\left[\delta Y_{i+1}\right]\right\|^2\right] + (h+ \lambda_3^{-1}) h\left( L^f_x \mathbb{E}\left[\|\delta X_i\|^2\right] + L^f_y \mathbb{E}[\|\delta Y_i\|^2 ] \right. \\
& \qquad \left. + L^f_z \mathbb{E}\left[\|\delta Z_i\|^2\right] \right).
\end{aligned}
\end{equation}
Therefore by a rearrangement 
\begin{equation}
\begin{aligned}
(1- (h+\lambda_3^{-1}) h L^f_y ) \mathbb{E} \left[\left\|\delta Y_i\right\|^2\right] 
\leq & (1+\lambda_3 h) \mathbb{E}\left[ \left\| \mathbb{E}_i \left[\delta Y_{i+1} \right]\right\|^2\right] \\
& + (h+ \lambda_3^{-1}) h L^f_x \mathbb{E}\left[ \|\delta X_i\|^2\right] + \left(h+\lambda_3^{-1}\right) h L^f_z \mathbb{E}\left[\|\delta Z_i\|^2\right].
\end{aligned}
\end{equation}
Consequently, for any $\lambda_3>0$ and sufficiently small $h$, we obtain the following estimate, for $i=0, 1, \ldots, N-1$
\begin{equation}\label{estimate_by_taylor}
\begin{aligned}
\mathbb{E} \left[ \left\|\delta Y_i\right\|^2 \right]
\leq & (1+ C_2 h)  \mathbb{E}\left[\left\| \mathbb{E}_i [\delta Y_{i+1} ]\right\|^2\right]  + C_3 h L^f_x \mathbb{E}\left[\|\delta X_i\|^2\right] + C_3  h L^f_z \mathbb{E}\left[\|\delta Z_i\|^2\right],
\end{aligned}
\end{equation}
where we used the definitions in \eqref{def:C2,C3,C4}. This proves \eqref{eq:lemma2:y}.

Next, we derive the estimate for $Z$. Recalling the definition in \eqref{deltaQ}, we get
\begin{equation}
\begin{aligned}
h  \delta Z_i 
& = \mathbb{E}_i\left[\delta Y_{i+1} \Delta W_i^{\top}\right]  =\mathbb{E}_i\left[\left(\delta Y_{i+1}-\mathbb{E}_i\left[\delta Y_{i+1}\right]\right) \Delta W_i^{\top}\right].
\end{aligned}
\end{equation}
Taking the Frobenius norm on both sides and applying the Cauchy–Schwartz inequality then yields\begin{equation}
\begin{aligned}
h \left\|\delta Z_i\right\| 
& = \left\| \mathbb{E}_i\left[ \left(\delta Y_{i+1}-\mathbb{E}_i\left[\delta Y_{i+1}\right]\right) \Delta W_i^{\top}\right] \right\| \\
& \leq \left(\mathbb{E}_i\left[\left\|\delta Y_{i+1}-\mathbb{E}_i\left[\delta Y_{i+1}\right]\right\|^2\right]\right)^{\frac{1}{2}} \left(\mathbb{E}_i\left[ \left\|\Delta W_i^{\top}\right\|^2\right] \right)^{\frac{1}{2}} \\
& =\left(\mathbb{E}_i\left[\left\|\delta Y_{i+1}-\mathbb{E}_i\left[\delta Y_{i+1}\right]\right\|^2\right]\right)^{\frac{1}{2}}  (hm)^{\frac{1}{2}},
\end{aligned}
\end{equation}
which leads to
\begin{equation}
\begin{aligned}
h \mathbb{E}\left[ \|\delta Z_i\|^2 \right]
\leq m \mathbb{E} \left[ \left\|\delta Y_{i+1}-\mathbb{E}_i\left[\delta Y_{i+1}\right]\right\|^2 \right] 
= m \left( \mathbb{E}\left[\|\delta Y_{i+1}\|^2\right] - \mathbb{E}\left[ \left\| \mathbb{E}_i\left[\delta Y_{i+1}\right] \right\|^2 \right] \right).
\end{aligned}
\end{equation}
Summing both sides from $0$ to $N-1$ and using the estimate \eqref{estimate_by_taylor}, we gather
\begin{equation}
\begin{aligned}
h \sum_{i=0}^{N-1} \mathbb{E} \left[ \left\|\delta Z_i\right\|^2 \right]
& \leq m \sum_{i=0}^{N-1} \left( \mathbb{E}\left[\|\delta Y_{i+1}\|^2\right] - \mathbb{E}\left[ \left\| \mathbb{E}_i\left[\delta Y_{i+1}\right] \right\|^2 \right]\right) \\
& = m \sum_{i=1}^{N-1}\left( \mathbb{E}\left[ \|\delta Y_i\|^2\right] - \mathbb{E}\left[\left\|\mathbb{E}_i\left[\delta Y_{i+1}\right]\right\|^2\right] \right) + \mathbb{E}\left[ \|\delta Y_N\|^2\right] - \mathbb{E}\left[\left\|\mathbb{E}_0\left[\delta Y_1\right]\right\|^2\right] \\
& \leq m \sum_{i=1}^{N-1}  \left( \mathbb{E}\left[ \|\delta Y_i\|^2\right]-\mathbb{E}\left[\left\|\mathbb{E}_i\left[\delta Y_{i+1}\right]\right\|^2\right]\right)+ m \mathbb{E}\left[\|\delta Y_N\|^2\right]  \\
& \leq m\sum_{i=1}^{N-1} \left( C_2 h \mathbb{E}\left[\left\| \mathbb{E}_i \left[\delta Y_{i+1} \right] \right\|^2\right] + C_3 h L^f_x \mathbb{E}\left[\left\|\delta X_i\right\|^2\right] + C_3 h L^f_z \mathbb{E}\left[\left\|\delta Z_i\right\|^2\right] \right)  \\
& \qquad  + m \mathbb{E}\left[ \| \delta Y_N \|^2 \right].
\end{aligned}
\end{equation}
Recalling $C_4$ in \eqref{def:C2,C3,C4}, it is easy to check that for any $\lambda_3> 2mL^f_z$ and sufficiently small $h>0$, we have $C_4>0$ and therefore
\begin{equation}
\begin{aligned}
h \sum_{i=0}^{N-1} \mathbb{E} \left[\left\|\delta Z_i\right\|^2\right] 
& \leq C_4 \sum_{i=1}^{N-1} \left(  C_2 h \mathbb{E} \left[ \left\| \mathbb{E}_i [\delta Y_{i+1} ]\right\|^2 \right]  + C_3 h L^f_x \mathbb{E}\left[\|\delta X_i\|^2\right] \right) + C_4 \mathbb{E}\left[ \| \delta Y_N \|^2 \right].
\end{aligned}
\end{equation}
\end{proof}

\begin{remark} As shown in the proofs of lemma \ref{lem:estimate1} and \ref{lem:estimate2}, the constants $C_j$, $j=1,2,3,4$ appear because of the $Z$ coupling in the drift. Conversely, the constants $K_j$, $j=1,2,3,4$ are present even in the less general case of only $Y$ coupling, and they are consistent with \cite{hanlong2020}. In order to emphasize the difference, we denoted these by different letters.
\end{remark}

With these auxiliary results, and particularly lemma \ref{lem:estimate2}, we are ready to state our main result, a posteriori error estimate, generalizing the convergence of the deep BSDE method.
\begin{theorem}[Convergence of the deep BSDE method]\label{thm:estimate2}
Suppose assumptions \ref{assumption:kb_kf}-\ref{assumption:holder} and let the conditions of theorem \ref{convergence_im} hold. Define
\begin{align}
\bar{B} &\coloneqq
	e^{\max(- \bar{K}_1 T, 0)} L^f_x  \bar{C}_1 \bar{C}_4 \bar{C}_3  \frac{e^{\bar{K}_1 T}-1}{\bar{K}_1} + e^{\max(- \bar{K}_1 T, 0)} \bar{C}_1 \bar{C}_4 L^g_x (1+\lambda_4 )  e^{\bar{K}_1 T}, \label{def:B_bar}\\
\bar{A} &\coloneqq \begin{aligned}[t]
		&\left( L^g_x (1+\lambda_4) e^{\left(\bar{K}_1+\bar{K}_3\right) T} + \frac{\bar{K}_4}{\bar{K}_1+\bar{K}_3} \left(e^{\left(\bar{K}_1+\bar{K}_3\right) T}-1\right) \right)\\
		&\times\left(1 - \bar{B} \right)^{-1}\left( \bar{K}_2 \frac{1-e^{-\left(\bar{K}_1+\bar{K}_3\right) T}}{\bar{K}_1+\bar{K}_3} + e^{\max(- \bar{K}_1 T, 0)} \bar{C}_1 \bar{C}_4 \bar{C}_2 \frac{1-e^{-\bar{K}_3 T}}{\bar{K}_3}  \right),
	\end{aligned}
	\label{def:A_bar}
\end{align}
where $\bar{K}_j\coloneqq \lim_{h\to 0} K_j, \bar{C}_j\coloneqq \lim_{h\to 0} C_j$ for $j=1, 2, 3, 4$.
If
\begin{align}\label{thm: condition}
\inf_{\lambda_1>0,\lambda_2>L^f_z, \lambda_3>2mL^f_z, \lambda_4>0} \max (\bar{B}, \bar{A})<1,
\end{align}
then there exists a constant $C>0$, depending only on $\mathbb{E}[\|x_0\|^2], \mathscr{L}, T, \lambda_1$, $\lambda_2$, $\lambda_3$ and $\lambda_4$, such that for sufficiently small $h$, it holds that
\begin{align}\label{eq:main_theorem:estimate}
	\begin{aligned}[b]
		\sup_{t \in[0, T]} 
		\big( \mathbb{E}[\|X_t-\hat{X}_t^\pi\|^2]
		&+ \mathbb{E}[\|Y_t-\hat{Y}_t^\pi\|^2] \big) + \int_0^T \mathbb{E}[\|Z_t-\hat{Z}_t^\pi\|^2] \mathrm{d}t \leq C \big( h + \mathbb{E}[\| g\left(X_T^\pi\right)-Y_T^\pi\|^2] \big),
	\end{aligned}
\end{align}
where $\hat{X}_t^\pi\coloneqq X_{t_i}^\pi, \hat{Y}_t^\pi\coloneqq Y_{t_i}^\pi, \hat{Z}_t^\pi\coloneqq Z_{t_i}^\pi$ for $t \in\left[t_i, t_{i+1}\right)$.
\end{theorem}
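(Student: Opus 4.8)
The plan is to bound the deep BSDE error by comparing the scheme, written in the conditional‑expectation form \eqref{formulation3}, against the implicit scheme \eqref{implicit_scheme}, and then to reinstate the time‑discretisation error through theorem \ref{convergence_im}. The starting observation is that $\{(X_{t_i}^\pi,Y_{t_i}^\pi,Z_{t_i}^\pi)\}_i$ and $\{(\bar X_{t_i}^\pi,\bar Y_{t_i}^\pi,\bar Z_{t_i}^\pi)\}_i$ solve the \emph{same} recursion \eqref{formulation3}: they differ only in how the $Y$-component is anchored (forward, $Y_0^\pi=\varphi_0(x_0;\theta_0^Y)$, versus backward, $\bar Y_T^\pi=g(\bar X_T^\pi)$). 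Hence lemmas \ref{lem:estimate1} and \ref{lem:estimate2} apply verbatim to the differences $\delta X_i\coloneqq X_{t_i}^\pi-\bar X_{t_i}^\pi$, $\delta Y_i\coloneqq Y_{t_i}^\pi-\bar Y_{t_i}^\pi$, $\delta Z_i\coloneqq Z_{t_i}^\pi-\bar Z_{t_i}^\pi$, giving \eqref{eq:lemma1:x}--\eqref{eq:lemma1:y} and \eqref{eq:lemma2:y}--\eqref{eq:lemma2:z}. The objective \eqref{euler_objective} enters only at the terminal index: since $\bar Y_N^\pi=g(\bar X_N^\pi)$, the $\epsilon$-Young inequality with parameter $\lambda_4>0$ gives $\mathbb{E}[\|\delta Y_N\|^2]\le(1+\lambda_4^{-1})\,\mathbb{E}[\|g(X_T^\pi)-Y_T^\pi\|^2]+(1+\lambda_4)L^g_x\,\mathbb{E}[\|\delta X_N\|^2]$.

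The heart of the argument is to close the resulting coupled system of scalar inequalities in two stages. First, I would substitute the $Z$-sum bound \eqref{eq:lemma2:z} (using $\mathbb{E}[\|\mathbb{E}_i[\delta Y_{i+1}]\|^2]\le\mathbb{E}[\|\delta Y_{i+1}\|^2]$) into the $X$-bound \eqref{eq:lemma1:x}, collapse the geometric weights into the corresponding exponential/integral factors, and pass $h\to 0$ so that $K_j\to\bar K_j$, $C_j\to\bar C_j$. Together with the $\delta Y_N$-inequality above this produces a self-referential bound for $h\sum_i\mathbb{E}[\|\delta Z_i\|^2]$ (equivalently for $\mathbb{E}[\|\delta X_N\|^2]$) in which the coefficient on the right-hand side is exactly $\bar B$ of \eqref{def:B_bar}; its two summands correspond to the feedback of the $X$-term into the $Z$-sum (carrying $L^f_x,\bar C_1,\bar C_3,\bar C_4$ and the factor $e^{\max(-\bar K_1T,0)}(e^{\bar K_1T}-1)/\bar K_1$) and to the feedback through $\delta Y_N$ (carrying $L^g_x(1+\lambda_4)$ and $e^{\max(-\bar K_1T,0)}e^{\bar K_1T}$). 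Since the chosen $\lambda$'s make $\bar B<1$, this bound can be inverted, which controls $\sup_n\mathbb{E}[\|\delta X_n\|^2]$, $h\sum_i\mathbb{E}[\|\delta Z_i\|^2]$ and $\mathbb{E}[\|\delta Y_N\|^2]$ by a fixed multiple of $h\sum_i\mathbb{E}[\|\delta Y_i\|^2]$ plus a multiple of the objective, the factor $(1-\bar B)^{-1}$ recording this inversion.

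Second, I would feed these bounds into the $Y$-estimate \eqref{eq:lemma1:y} -- the $\bar C_2$-contribution entering through $h\sum_i\mathbb{E}[\|\delta Z_i\|^2]$, the $\bar K_3,\bar K_4$-contributions through the estimate itself -- to obtain an inequality of the shape $\mathbb{E}[\|\delta Y_n\|^2]\le(\text{objective terms})+\gamma\,h\sum_i\mathbb{E}[\|\delta Y_i\|^2]$, where the effective feedback constant $\gamma$, once the geometric sums are replaced by their integral limits and the stage-one inversion is accounted for, is exactly $\bar A$ of \eqref{def:A_bar}: the first factor of $\bar A$ collects the $e^{(\bar K_1+\bar K_3)T}$-type factors from composing the two exponential kernels (through $L^g_x(1+\lambda_4)$ and $\bar K_4$), and the last factor the feedback carried by $\bar K_2$ and by $\bar C_1\bar C_4\bar C_2$. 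Because $\bar A<1$, a discrete Gronwall / rearrangement argument then gives $\sup_n\mathbb{E}[\|\delta Y_n\|^2]\le C\,\mathbb{E}[\|g(X_T^\pi)-Y_T^\pi\|^2]$, hence the same for $\sup_n\mathbb{E}[\|\delta X_n\|^2]$ and $h\sum_i\mathbb{E}[\|\delta Z_i\|^2]$. To conclude, for $t\in[t_i,t_{i+1})$ I would use the triangle inequality $\mathbb{E}[\|X_t-\hat X_t^\pi\|^2]\le 2\mathbb{E}[\|X_t-\bar X_t^\pi\|^2]+2\mathbb{E}[\|\delta X_i\|^2]$ and likewise for $Y$ and for $\int_0^T\mathbb{E}[\|Z_t-\hat Z_t^\pi\|^2]\,\mathrm{d}t$; theorem \ref{convergence_im} bounds the first terms by $C(1+\mathbb{E}\|x_0\|^2)h$ and the above by $C\,\mathbb{E}[\|g(X_T^\pi)-Y_T^\pi\|^2]$, which is \eqref{eq:main_theorem:estimate}. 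Two routine points remain: one fixes at the outset a quadruple $(\lambda_1,\lambda_2,\lambda_3,\lambda_4)$ realising the strict inequality in \eqref{thm: condition} (on which $C$ depends), and one uses continuity in $h$ of the discrete analogues of $\bar B$ and $\bar A$, together with $\max(\bar B,\bar A)<1$, to guarantee that both inversions remain admissible for all sufficiently small $h$.

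I expect the main obstacle to be the bookkeeping in this two-stage argument: organising the nested substitutions of \eqref{eq:lemma1:x}, \eqref{eq:lemma1:y}, \eqref{eq:lemma2:y} and \eqref{eq:lemma2:z} so that the two smallness thresholds that emerge are precisely $\bar B$ and $\bar A$, and, along the way, controlling the sign-dependent exponential factors such as $e^{\max(-\bar K_1T,0)}$, $(e^{\bar K_1T}-1)/\bar K_1$, $(1-e^{-\bar K_3T})/\bar K_3$ and $\bar K_4(e^{(\bar K_1+\bar K_3)T}-1)/(\bar K_1+\bar K_3)$, which arise when the geometric weights are passed to their integral limits and $\bar K_1$ or $\bar K_3$ may be negative. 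The reduction to the implicit scheme, the $\epsilon$-Young step for $\delta Y_N$, and the continuity-in-$h$ argument are, by contrast, standard and parallel \cite{hanlong2020}.
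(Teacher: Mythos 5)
Your proposal is correct and follows essentially the same route as the paper: compare the Euler and implicit solutions of \eqref{formulation3} via lemmas \ref{lem:estimate1} and \ref{lem:estimate2}, absorb the terminal mismatch with the $\lambda_4$-Young step, close the coupled inequalities first through the $\bar{B}<1$ inversion of the $X$/$Z$ feedback and then through $\bar{A}<1$, pass $K_j\to\bar{K}_j$, $C_j\to\bar{C}_j$ as $h\to 0$, and finish with theorem \ref{convergence_im} and the triangle inequality. The only cosmetic difference is that the paper closes the system in the exponentially weighted maxima $\mathcal{X},\mathcal{Y}$ of \eqref{def:cal_XY} (so the time factors such as $(1-e^{-(\bar{K}_1+\bar{K}_3)T})/(\bar{K}_1+\bar{K}_3)$ sit inside $\bar{A},\bar{B}$ rather than in an explicit $h\sum_i\mathbb{E}[\|\delta Y_i\|^2]$ term), which is precisely the bookkeeping you flag as the remaining work.
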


\begin{proof}
Let $X_{t_i}^{\pi, 1}=X_{t_i}^\pi, Y_{t_i}^{\pi, 1}=Y_{t_i}^\pi, Z_{t_i}^{\pi, 1}=$ $Z_{t_i}^\pi$ given by the Euler scheme \eqref{eq:euler}, and $X_{t_i}^{\pi, 2}=\bar{X}_{t_i}^\pi, Y_{t_i}^{\pi, 2}=\bar{Y}_{t_i}^\pi, Z_{t_i}^{\pi, 2}=\bar{Z}_{t_i}^\pi$ given by implicit scheme \eqref{implicit_scheme}. Both of them solve \eqref{formulation3}, and therefore we can apply lemma \ref{lem:estimate1} to bound their differences. In what follows, we use the same notations as in the proof of lemma \ref{lem:estimate1}.

First, using the RMS-GM inequality, for any $\lambda_4>0$ we get
\begin{equation}\label{estimate_YN}
\begin{aligned}
\mathbb{E}\big[\|\delta Y_N\|^2\big]
& \equiv \mathbb{E}\big[ \| g(\bar{X}_T^\pi)-Y_T^\pi \|^2\big] \leq  (1+\lambda_4^{-1}) \mathbb{E}\big[ \| g(X_T^\pi)-Y_T^\pi \|^2\big] + L^g_x(1+\lambda_4) \mathbb{E}\big[\|\delta X_N\|^2\big].
\end{aligned}
\end{equation}
Let
\begin{equation}\label{def:cal_XY}
\mathcal{X} \coloneqq \max_{0 \leq n \leq N} e^{-K_1 n h} \mathbb{E} \left[ \left\|\delta X_n\right\|^2 \right], \qquad 
\mathcal{Y} \coloneqq \max_{0 \leq n \leq N} e^{K_3 n h} \mathbb{E} \left[ \left\|\delta Y_n\right\|^2 \right].
\end{equation}
From estimate \eqref{eq:lemma1:y} in lemma \ref{lem:estimate1}, we derive the following by multiplying with $e^{K_3nh}$ on both sides
\begin{equation}
\begin{aligned}
& e^{K_3 nh} \mathbb{E}\left[ \|\delta Y_n\|^2 \right] \\
\leq & e^{K_3 T} \mathbb{E}\left[\|\delta Y_N\|^2\right] + K_4 \sum_{i=n}^{N-1} e^{K_3 ih} \mathbb{E}\left[\|\delta X_i\|^2\right] h \\
\leq & e^{K_3 T} \left( \left(1+\lambda_4^{-1}\right) \mathbb{E}\left[\| g(X_T^\pi)-Y_T^\pi\|^2 \right] + L^g_x (1+\lambda_4) \mathbb{E}\left[\|\delta X_N\|^2\right]  \right) + K_4 \sum_{i=n}^{N-1} e^{K_3 ih} \mathbb{E}\left[\|\delta X_i\|^2\right] h \\
\leq & e^{K_3 T} (1+\lambda_4^{-1}) \mathbb{E}\left[ \| g(X_T^\pi)-Y_T^\pi\|^2\right] + \left( L^g_x (1+\lambda_4) e^{(K_1+K_3) T} + K_4 \sum_{i=n}^{N-1} e^{(K_1+K_3) ih} h \right)  \mathcal{X},
\end{aligned}
\end{equation}
where we used the definition of $\mathcal{X}$ in \eqref{def:cal_XY} and the estimate \eqref{estimate_YN} in the last inequality.
Maximizing over $n$ subsequently yields
\begin{equation}\label{estimate_cal_Y}
\begin{aligned}
\mathcal{Y} & \leq e^{K_3 T} (1+\lambda_4^{-1}) \mathbb{E}\left[\| g(X_T^\pi)-Y_T^\pi\|^2 \right] +  \left(L^g_x (1+\lambda_4) e^{(K_1+K_3) T} + K_4 h \frac{e^{(K_1+K_3) T}-1}{e^{(K_1+K_3) h}-1}\right) \mathcal{X} .
\end{aligned}
\end{equation}

We approach $\delta X_n$ in the same manner, and from \eqref{eq:lemma1:x} collect
\begin{equation}
\begin{aligned}
e^{-K_1 nh} \mathbb{E}\left[\left\|\delta X_n\right\|^2 \right]
& \leq K_2 \sum_{i=0}^{n-1} e^{-K_1 (i+1)h} \mathbb{E}\left[ \left\|\delta Y_i\right\|^2 \right] h + C_1 \sum_{i=0}^{n-1} e^{-K_1 (i+1)h} \mathbb{E}\left[ \left\|\delta Z_i\right\|^2 \right] h \\
& \leq K_2 \mathcal{Y} \sum_{i=0}^{n-1} e^{-K_1 (i+1)h - K_3 ih} h + C_1 \sum_{i=0}^{n-1} e^{-K_1 (i+1)h} \mathbb{E}\left[ \left\|\delta Z_i\right\|^2\right] h.
\end{aligned}
\end{equation}
Additionally, from \eqref{eq:lemma2:y} and \eqref{eq:lemma2:z} we get
\begin{equation}
\begin{aligned}
& C_1 \sum_{i=0}^{n-1} e^{-K_1 h(i+1)} \mathbb{E} \left[\| \delta Z_i\|^2 \right] h \\
\leq & e^{\max(-K_1 T, 0)} C_1 \sum_{i=0}^{N-1} \mathbb{E} \left[\| \delta Z_i\|^2 \right] h  \\
\leq & e^{\max(-K_1 T, 0)} C_1 C_4 \left( \sum_{i=1}^{N-1} C_2 h \mathbb{E}\left[ \left\|\delta Y_{i+1}\right\|^2 \right] + \sum_{i=1}^{N-1} C_3 h L^f_x \mathbb{E} \left[ \| \delta X_i \|^2 \right] +  \mathbb{E} \left[\|\delta Y_N\|^2 \right] \right)  \\
\leq & e^{\max(-K_1 T, 0)}  C_1 C_4 h  \left(C_2 \sum_{i=1}^{N-1} e^{-K_3 (i+1) h}   \mathcal{Y} + C_3  L^f_x  \sum_{i=1}^{N-1} e^{K_1 ih} \mathcal{X} \right)  +  e^{\max(-K_1 T, 0)}  C_1 C_4 \mathbb{E} \left[\|\delta Y_N\|^2 \right]  \\
\leq & e^{\max(-K_1 T, 0)} C_1 C_4 h \left( C_2 e^{-K_3 h} \frac{e^{-K_3 T}-1}{e^{-K_3 h}-1} \mathcal{Y} + C_3  L^f_x  \frac{e^{K_1 T}-1}{e^{K_1 h}-1} \mathcal{X} \right)  +  e^{\max(-K_1 T, 0)} C_1 C_4 \mathbb{E} \left[\|\delta Y_N\|^2 \right],
\end{aligned}
\end{equation}
where we recall the definitions in \eqref{def:K1,K2,K3,K4,C1} and \eqref{def:C2,C3,C4}.
Combining these inequalities and applying estimate \eqref{estimate_YN}, we obtain the following by maximizing over $n$
\begin{equation}\label{estimate_cal_X}
\begin{aligned}
\mathcal{X}
\leq & K_2 \mathcal{Y} h e^{-K_1 h} \frac{e^{-(K_1+K_3) T}-1}{e^{-(K_1+K_3) h}-1} + e^{\max(-K_1 T, 0)} C_1 C_4 h  C_2 e^{-K_3 h} \frac{e^{-K_3 T}-1}{e^{-K_3 h}-1} \mathcal{Y} \\
& + e^{\max(-K_1 T, 0)} C_1 C_4 h C_3  L^f_x  \frac{e^{K_1 T}-1}{e^{K_1 h}-1} \mathcal{X} 
+ e^{\max(-K_1 T, 0)} C_1 C_4 L^g_x (1+\lambda_4) e^{K_1 T} \mathcal{X} \\
& + e^{\max(-K_1 T, 0)} C_1 C_4 (1+\lambda_4^{-1}) \mathbb{E}\left[ \left\| g\left(X_T^\pi\right)-Y_T^\pi\right\|^2\right].
\end{aligned}
\end{equation}
In order to simplify the expressions, we define
\begin{align}\label{def:A1,A2,A3,A4,A5,A6,A7}
    A_1(h) &\coloneqq e^{K_3 T} (1+\lambda_4^{-1}),\quad A_2(h) \coloneqq L^g_x (1+\lambda_4) e^{(K_1+K_3) T} + K_4 h \frac{e^{(K_1+K_3)T}-1}{e^{(K_1+K_3)h}-1},\\
    A_3(h) &\coloneqq K_2 h e^{-K_1 h} \frac{e^{-(K_1+K_3) T}-1}{e^{-(K_1+K_3) h}-1},\quad A_4(h) \coloneqq e^{\max(-K_1 T, 0)} C_1 C_4 C_2 h e^{-K_3h} \frac{e^{-K_3 T}-1}{e^{-K_3 h}-1},\\
    A_5(h) &\coloneqq e^{\max(-K_1 T, 0)} C_1 C_4 C_3  L^f_x  h \frac{e^{K_1 T}-1}{e^{K_1 h}-1},\quad A_6(h) \coloneqq e^{\max(-K_1 T, 0)} C_1 C_4 L^g_x(1+\lambda_4)  e^{K_1 T},\\ 
    A_7(h) &\coloneqq e^{\max(-K_1 T, 0)} C_1 C_4 (1+\lambda_4^{-1}).
\end{align}
Consequently, \eqref{estimate_cal_Y} and \eqref{estimate_cal_X} read as follows
\begin{align}
\mathcal{Y} & \leq A_1(h) \mathbb{E}\left[ \left\| g\left(X_T^\pi\right)-Y_T^\pi\right\|^2\right] + A_{2}(h) \mathcal{X} ,  \label{estimate_cal_Y_2}   \\
\mathcal{X} & \leq A_3(h) \mathcal{Y} + A_4(h) \mathcal{Y}  + A_5(h)\mathcal{X} + A_6(h)\mathcal{X} + A_7(h) \mathbb{E}\left[ \left\| g\left(X_T^\pi\right)-Y_T^\pi\right\|^2\right] .  \label{estimate_cal_X_2}
\end{align}
Next, we solve \eqref{estimate_cal_Y_2}-\eqref{estimate_cal_X_2} such that $\mathcal{Y}$ and $\mathcal{X}$ are both controlled by $\mathbb{E}[\|g(X_T^\pi)-Y_T^\pi \|^2]$. Let 
\begin{equation}\label{def:A_h,B_h}
    \begin{aligned}
        B(h) \coloneqq A_5(h) + A_6(h), \quad A(h) \coloneqq  A_2(h) \left( 1-A_5(h)-A_6(h) \right)^{-1} (A_3(h) + A_4(h)).
    \end{aligned}
\end{equation}
Whenever $B(h)<1$, rearranging the terms in \eqref{estimate_cal_X_2} yields
\begin{equation}\label{ineq: cal_X_with_B}
\begin{aligned}
\mathcal{X} & \leq \left((1 - A_5(h) - A_6(h) \right)^{-1} \left(  \left(A_3(h) + A_4(h)\right) \mathcal{Y} + A_7(h) \mathbb{E}\left[ \left\| g\left(X_T^\pi\right)-Y_T^\pi\right\|^2\right]  \right).
\end{aligned}
\end{equation}
Additionally, if also $A(h)<1$, we can derive the following  by substituting \eqref{ineq: cal_X_with_B} into \eqref{estimate_cal_Y_2}
\begin{equation}\label{ineq: cal_Y_with_AB}
\begin{aligned}
\mathcal{Y}
& \leq \left(1-A(h)\right)^{-1}  \left( A_1(h) + A_2(h) \left(1-B(h) \right)^{-1} A_7(h) \right)
\mathbb{E}\left[ \left\| g\left(X_T^\pi\right)-Y_T^\pi\right\|^2\right].
\end{aligned}
\end{equation}
From \eqref{def:K1,K2,K3,K4,C1} and \eqref{def:C2,C3,C4}, we directly collect the limits
\begin{align}\label{def:bar:K1,K2,K3,K4,C1}
    & \bar{K}_1 = 2 k^b +\lambda_1+L^\sigma_x, \quad\bar{K}_2 = \lambda_1^{-1} L^b_y + L^\sigma_y, \quad\bar{K}_3= 2 k^f +\lambda_2, \quad\bar{K}_4=  L^f_x \lambda_2^{-1}, \\
    & \bar{C}_1 = \lambda_1^{-1} L^b_z,\quad \bar{C}_2 = 2 ( \lambda_3^{-1} L^f_y + \lambda_3 ),\quad \bar{C}_3 = 2 \lambda_3^{-1},\quad \bar{C}_4 = \frac{m}{1-2m L^f_z \lambda_3^{-1}}.
\end{align}
Consequently, from \eqref{def:A1,A2,A3,A4,A5,A6,A7} we directly have
\begin{align}\label{def:bar:A1,A2,A3,A4,A5,A6,A7}
    \bar{A}_1 & = e^{\bar{K}_3 T}  (1+\lambda_4^{-1} ),\quad \bar{A}_2= L^g_x (1+\lambda_4) e^{\left(\bar{K}_1+\bar{K}_3\right) T} + \frac{\bar{K}_4}{\bar{K}_1+\bar{K}_3}  \left(e^{\left(\bar{K}_1+\bar{K}_3\right) T}-1\right),\\
    \bar{A}_3 &= \bar{K}_2 \frac{1-e^{-\left(\bar{K}_1+\bar{K}_3\right) T}}{\bar{K}_1+\bar{K}_3},\quad \bar{A}_4 = e^{\max(-\bar{K}_1 T, 0)} \bar{C}_1 \bar{C}_4 \bar{C}_2 \frac{1-e^{-\bar{K}_3 T}}{\bar{K}_3},\\
    \bar{A}_5 & = e^{\max(-\bar{K}_1 T, 0)} \bar{C}_1 \bar{C}_4 \bar{C}_3  L^f_x  \frac{e^{\bar{K}_1 T}-1}{\bar{K}_1},\quad \bar{A}_6 = e^{\max(-\bar{K}_1 T, 0)} \bar{C}_1 \bar{C}_4 L^g_x (1+\lambda_4 )  e^{\bar{K}_1 T},\\
    \bar{A}_7 &= e^{\max(-\bar{K}_1 T, 0)} \bar{C}_1 \bar{C}_4 (1+\lambda_4^{-1} ),
\end{align}
with the convention $\bar{A}_j=\lim_{h\to 0} A_j(h)$, $j=1,\dots, 7$. If $\bar{K}_1<0$ the expressions above hold only for sufficiently small $h$ such that $K_1<0$. Using the definitions in \eqref{def:A_h,B_h}, it is straightforward to check that $\lim_{h \rightarrow 0} B(h)\eqqcolon \bar{B}$ and $\lim_{h \rightarrow 0} A(h) \eqqcolon \bar{A}$, given by \eqref{def:B_bar} and \eqref{def:A_bar}, respectively.
From \eqref{def:cal_XY}, we get
\begin{align}\label{eq:thm:y:exponential_time_cal}
    \max_{0\leq n\leq N} \mathbb{E} \left[ \left\|\delta X_n\right\|^2\right] &\leq e^{\max(K_1 T, 0)} \mathcal{X},\quad
    \max_{0\leq n\leq N} \mathbb{E} \left[ \left\|\delta Y_n\right\|^2\right]  \leq e^{\max(-K_3 T, 0)} \mathcal{Y},
\end{align}
with $K_1, K_3$ defined in \eqref{def:K1,K2,K3,K4,C1}, both depending on $h$. 
When $\bar{B}<1$ and $\bar{A}<1$, we have that for any sufficiently small $h$ \eqref{ineq: cal_Y_with_AB} holds true. Hence, combining \eqref{ineq: cal_Y_with_AB} with \eqref{eq:thm:y:exponential_time_cal}, we derive that for any sufficiently small $h$
\begin{align}\label{estimate_P}
    \max_{0\leq n\leq N} \mathbb{E} \left[ \left\|\delta Y_n\right\|^2\right]  \leq C\left(\lambda_1, \lambda_2, \lambda_3, \lambda_4\right) \mathbb{E}\left[ \left\| g\left(X_T^\pi\right)-Y_T^\pi\right\|^2\right],
\end{align}
with a constant independent of $h$, depending only on the limits defined in \eqref{def:bar:K1,K2,K3,K4,C1}, \eqref{def:B_bar}, \eqref{def:A_bar}, \eqref{def:bar:A1,A2,A3,A4,A5,A6,A7}, and thus implicitly on $\lambda_1, \lambda_2, \lambda_3, \lambda_4$. Similarly, when $\bar{B}, \bar{A}<1$, combining \eqref{ineq: cal_X_with_B} with \eqref{ineq: cal_Y_with_AB}, we also have that for any sufficiently small $h$
\begin{align}\label{estimate_X}
    \max_{0\leq n\leq N} \mathbb{E} \left[ \left\|\delta X_n\right\|^2\right] \leq C\left(\lambda_1, \lambda_2, \lambda_3, \lambda_4\right) \mathbb{E} \left[\left\| g\left(X_T^\pi\right)-Y_T^\pi\right\|^2\right],
\end{align}
for a constant determined by \eqref{def:bar:A1,A2,A3,A4,A5,A6,A7} which is independent of $h$.

In order to estimate $\mathbb{E}[\|\delta Z_n \|^2]$, we consider \eqref{deltaP_2} from the proof of lemma \ref{lem:estimate1}. Notice that $1-L^f_z/\lambda_2 > 0$ since we require $\lambda_2>L^f_z\geq 0$, then by rearranging the terms in \eqref{deltaP_2} we obtain
\begin{equation} 
\begin{aligned}
 \mathbb{E} \left[ \left\|\delta Z_i\right\|^2\right] h
\leq & \left(1 - L^f_z\lambda_2^{-1} \right)^{-1} \left(  L^f_x \lambda_2^{-1} \mathbb{E}\left[ \left\|\delta X_i\right\|^2 \right] h +  \mathbb{E} \left[\left\|\delta Y_{i+1}\right\|^2 \right] - \mathbb{E}\left[\left\|\delta Y_{i}\right\|^2 \right]  \right. \\
& \left. + \left(2 k^f + \lambda_2 \right) h 
 \mathbb{E} \left[\left\|\delta Y_{i}\right\|^2 \right] \right).
\end{aligned}
\end{equation}
Summing from 0 to $N-1$ and taking the maximum on the right hand side, we gather
\begin{equation}\label{estimate_Q}
\begin{aligned}
\sum_{i=0}^{N-1} \mathbb{E} \left[ \left\|\delta Z_i\right\|^2\right] h
& \leq \begin{aligned}[t]
    \left(1 - L^f_z\lambda_2^{-1} \right)^{-1} \Big( &L^f_x \lambda_2^{-1} T \max_{0\leq n\leq N} \mathbb{E}\left[\left\|\delta X_n\right\|^2\right]\\ 
    &+  \left( \max\{(2k^f + \lambda_2) T, 0\} + 1 \right) \max_{0\leq n\leq N} \mathbb{E}\left[\left\|\delta Y_n\right\|^2\right]  \Big)
\end{aligned}\\
& \leq C(\lambda_1, \lambda_2, \lambda_3,\lambda_4) \mathbb{E}\left[ \left\| g\left(X_T^\pi\right)-Y_T^\pi\right\|^2 \right],  
\end{aligned}
\end{equation}
using \eqref{estimate_X}, \eqref{estimate_P}.

Finally, combining estimates \eqref{estimate_X}, \eqref{estimate_P} and \eqref{estimate_Q} with the convergence of the discrete time approximations such as in theorem \ref{convergence_im}, we prove our statement.
\end{proof}

\section{Interpretation of the conditions in theorem \ref{thm:estimate2}}\label{section:interpretation}
In this section, we apply theorem \ref{thm:estimate2} to special cases of FBSDEs and discuss how the conditions change depending on the coefficients in \eqref{eq:FBSDE}. Furthermore, we illustrate the role of the abstract conditions imposed by \eqref{thm: condition}, and discuss several important heuristic settings under which they are satisfied.

\begin{enumerate}[label=(\arabic*), wide, labelindent=0pt, listparindent=0pt]
\item \textit{Decoupled FBSDE}

In this case, $L^b_y = L^b_z = L^\sigma_y \equiv 0$, which immediately implies that $\bar{B} = \bar{A} \equiv 0$, since both $\bar{C}_1=0$ and $\bar{K}_2=0$. Estimates \eqref{ineq: cal_X_with_B}, \eqref{ineq: cal_Y_with_AB} then reduce to
\begin{equation}
\mathcal{X}  = 0, \qquad \mathcal{Y} \leq e^{K_3 T}  (1+\lambda_4^{-1} ) \mathbb{E} \left[ \left\| g\left(X_T^\pi\right)-Y_T^\pi\right\|^2 \right].
\end{equation}
Consequently, the total errors in the SDE reduce to those of the Euler-Maruyama discretization from \eqref{implicit_scheme}, whereas for the BSDE part a posteriori error term remains in \eqref{eq:main_theorem:estimate}.

\item \label{interpretation:no_z} \textit{Coupled FBSDE with only $Y$ coupling}

In this case $L^b_z=0$ and therefore $\bar{C}_1=0$. We remark that we fully recover the result of \cite{hanlong2020} in this setting, in particular
\begin{align}\label{eq:a_bar:b=0}
    \bar{B}\equiv 0,\quad \bar{A}=\begin{aligned}[t]
        &\Big( L^g_x (1+\lambda_4) e^{\left(\bar{K}_1+\bar{K}_3\right) T} + \frac{\bar{K}_4}{\bar{K}_1+\bar{K}_3}  \big(e^{\left(\bar{K}_1+\bar{K}_3\right) T}-1\big)\Big)\Big( \bar{K}_2 \frac{1-e^{-\left(\bar{K}_1+\bar{K}_3\right) T}}{\bar{K}_1+\bar{K}_3}  \Big),
    \end{aligned}
\end{align}
where the condition $\bar{B}<1$ becomes redundant and is automatically satisfied, whereas $\bar{A}$ has the same expression as the one derived in \cite{hanlong2020}. Moreover, we recover the weak and monotonicity conditions as in \cite[remark 6]{hanlong2020}, which guarantee $\bar{A}<1$. 

\item\label{interpretation:general_case} \textit{Coupled FBSDE in general as in \eqref{eq:FBSDE}}

This is the general setting we considered throughout this paper corresponding to \eqref{eq:FBSDE}. In order to guarantee that the conditions of theorem \ref{thm:estimate2}, and in particular \eqref{thm: condition} are satisfied we need certain requirements about $T$, the constants in $\mathcal{L}$, and choose $\lambda_1$, $\lambda_2$, $\lambda_3$ and $\lambda_4$ in an appropriate way.
Recall that $\bar{B} \equiv \bar{B}(\lambda_1, \lambda_2, \lambda_3, \lambda_4)$ and $\bar{A} \equiv \bar{A}(\lambda_1, \lambda_2, \lambda_3, \lambda_4)$ are functions of all $\lambda$s, defined by \eqref{def:B_bar} and \eqref{def:A_bar}, respectively. We divide the discussion into the following five cases which all have important physical interpretations.

\begin{enumerate}
    \item \textit{Small time duration.} Suppose all other constants, $\lambda_2$, $\lambda_3$ and $\lambda_4$ are fixed. If $T>0$ is sufficiently small, we can choose, for instance, $\lambda_1 = 1/\sqrt{T}$ which implies that $\bar{B}$ is sufficiently close to zero due to the factors $\bar{C}_1$ and $e^{\bar{K}_1 T}-1$. Similarly $\bar{A}$ is sufficiently close to zero as well, due to the scaling factors $\bar{K}_2$, $1 - e^{- (\bar{K}_1 + \bar{K}_3) T}$, $\bar{C}_1$, $1-e^{\bar{K}_3 T}$ in the last term of \eqref{def:A_bar}. Therefore \eqref{thm: condition} is satisfied for sufficiently small time durations $T$.
    
    \item \textit{Weak coupling from BSDE to SDE.} Suppose all other constants, $\lambda_1, \lambda_2$, $\lambda_3$ and $\lambda_4$ are fixed. If $L^b_y>0$, $L^b_z>0$ and $L^\sigma_y>0$ are sufficiently small, then so are the factors $\bar{C}_1$ and $\bar{K}_2$. Notice that $\bar{B}$ is scaled by $\bar{C}_1$, and for $\bar{A}$, the last term in \eqref{def:A_bar} is scaled by both $\bar{C}_1$ and $\bar{K}_2$, and therefore both $\bar{A}$ and $\bar{B}$ are sufficiently close to zero and \eqref{thm: condition} holds.

    \item \label{interpretation:fully_coupled:small_L_b_z} \textit{Weak coupling from SDE to BSDE}. Suppose all other constants, $\lambda_1, \lambda_2$, $\lambda_3$ and $\lambda_4$ are fixed. If $L^g_x>0$ and $L^f_x>0$ are sufficiently small, and additionally $L^b_z>0$ is sufficiently small as well, then both $\bar{B}$ and $\bar{A}$ could be sufficiently close to zero, due to the scaling factors  $L^f_x$, $\bar{C}_1$ and $L^g_x$ in $\bar{B}$, and $L^g_x$ and $\bar{K}_4$ in the first term of \eqref{def:A_bar} for $\bar{A}$. Consequently, \eqref{thm: condition} is satisfied.

    \item\label{interpretation:k_b<0} \textit{Monotonicity in $b$.} Suppose all other constants, $\lambda_3$ and $\lambda_4$ are fixed,  $k^b<0$ is sufficiently negative and $L^b_z>0$ is sufficiently small. We set $\lambda_1 = - 2k^b-\epsilon >0 $ which implies $\bar{K}_1 = -\epsilon + L^\sigma_x <0 $ is fixed for any chosen $\epsilon>L^\sigma_x$. Then, $\bar{B}$ could be sufficiently close to 0 since $\bar{K}_1<0$ is fixed and $\bar{C}_1 = \lambda_1^{-1} L^b_z$ could be sufficiently small. For $\bar{A}$, we directly compute from \eqref{def:A_bar}
\begin{equation}\label{interpretation_barA}
\begin{aligned}
\bar{A} = 
& \left(1 - \bar{B} \right)^{-1} 
 \left(  e^{\max( - \bar{K}_1 T, 0)} \bar{C}_1 \bar{C}_4 \bar{C}_2 \frac{1-e^{-\bar{K}_3 T}}{\bar{K}_3} \left( L^g_x (1+\lambda_4) e^{\left(\bar{K}_1+\bar{K}_3\right) T}  \right)  \right. \\
& \left. + e^{\max(- \bar{K}_1 T, 0)} \bar{C}_1 \bar{C}_4 \bar{C}_2 \frac{1-e^{-\bar{K}_3 T}}{\bar{K}_3}   \left( \frac{\bar{K}_4}{\bar{K}_1+\bar{K}_3} \left(e^{\left(\bar{K}_1+\bar{K}_3\right) T}-1\right) \right)  \right. \\
&  + \left. L^g_x (1+\lambda_4) \bar{K}_2 \frac{  e^{\left(\bar{K}_1+\bar{K}_3\right) T} -1 }{\bar{K}_1+\bar{K}_3} 
+ \bar{K}_2 \bar{K}_4 \frac{ e^{\left( \bar{K}_1+\bar{K}_3\right) T} + e^{-\left( \bar{K}_1+\bar{K}_3\right) T} -2 }{ (\bar{K}_1+\bar{K}_3 )^2}  \right). 
\end{aligned}
\end{equation}
Let us first consider the last two terms in \eqref{interpretation_barA}. Notice that we could fix some sufficiently large $\epsilon>L^\sigma_x$ and $\lambda_2> L^f_z$ such that $\bar{K}_1+\bar{K}_3 =  -\epsilon + L^\sigma_x + 2k^f + \lambda_2 <0$ is fixed and negative enough, and the penultimate term is sufficiently small because the fraction term is decreasing in $\bar{K}_1+\bar{K}_3$ and $\lambda_1$ is sufficiently large. The last term could be sufficiently small as well, since $\bar{K}_1+\bar{K}_3$ is fixed and $\bar{K}_4 = L^f_x / \lambda_2$ can be sufficiently small by choosing a large enough $\lambda_2$. For the remaining first two terms in \eqref{interpretation_barA}, notice that $\bar{K}_1+\bar{K}_3$ is fixed, and for the previously chosen $\lambda_2$, $\bar{K}_3$ is fixed as well, then both two terms are scaled by $\bar{C}_1$ with a sufficiently small $L^b_z$. Combining all these arguments we have $\bar{A}<1$ and conclude that \eqref{thm: condition} is verified.

\item \label{interpretation:k_f<0}\textit{Monotonicity in $f$.} Suppose all other constants, $\lambda_1$, $\lambda_3$ and $\lambda_4$ are fixed. If $k^f<0$ is sufficiently negative, and $L^b_z>0$ is sufficiently small, then it is easy to see that $\bar{B}$ could be sufficiently close to 0 due to the scaling factor $\bar{C}_1 = \lambda_1^{-1} L^b_z$  and the fact that $\bar{B}$ does not depend on $k^f$ and $\lambda_2$. To deal with $\bar{A}$, we set $\lambda_2 = -2k^f - \epsilon$, where $\epsilon>0$ is chosen such that $\bar{K}_1+\bar{K}_3 = 2k^b + \lambda_1 + L^\sigma_x -\epsilon<0$ is negative enough and the penultimate term in \eqref{interpretation_barA} is sufficiently small. Since now $\bar{K}_1+\bar{K}_3$ and $\bar{K}_3 = -\epsilon $ are fixed, the remaining three terms in $\bar{A}$ are all scaled by $\bar{C}_1$ and therefore we have $\bar{A}<1$ guaranteeing \eqref{thm: condition}.
\end{enumerate}

\item \textit{Coupled FBSDE with $b$ only depending on $Z$}

In this case $L^b_y = 0$. However, all constants $\bar{K}_j$, $\bar{C}_j$ for $j=1,2,3,4$ defined in \eqref{def:K1,K2,K3,K4,C1}, \eqref{def:C2,C3,C4} are not zero in general, and therefore the conditions fall back under the general case discussed above. Even in a more special setting with  $L^b_y = L^\sigma_y = L^f_y = k^f= 0$, i.e. when there is no $Y$ coupling in neither the forward nor backward equation, the conditions do not seem to be easier to satisfy. Specifically, we have $\bar{K}_2=0$ in this special setting and consequently there will be one term less in $\bar{A}$ given by \eqref{def:A_bar}, but the expression \eqref{def:B_bar} for $\bar{B}$ remains unchanged as it does not depend on $\bar{K}_2$. On one hand, this reduces some efforts due to the missing term in $\bar{A}$, for example, we do not take care of the last terms discussed in \eqref{interpretation:k_b<0} as they vanish. On the other hand, as $k^f=0$ in this special case, we lost one possible way to make the conditions hold, i.e. \eqref{interpretation:k_f<0} does not apply anymore. In overall, we conclude that coupling in $Z$, even in the special cases mentioned above, induces the need to treat the conditions in theorem \ref{thm:estimate2} under the general framework established by our convergence result.
\end{enumerate}

\begin{remark} The above five cases in \ref{interpretation:general_case} may be viewed as a generalization of the weak and monotonicity conditions stated in \cite{hanlong2020, bender2008time}. One should note that, because of the extra $Z$ coupling, we have to pay an extra price in establishing these five cases, e.g., we need to choose $\lambda$s appropriately instead of fixing them as constants as in \cite{hanlong2020}, and in particular, we require $L^b_z>0$ to be sufficiently small for \ref{interpretation:fully_coupled:small_L_b_z}, \ref{interpretation:k_b<0} and \ref{interpretation:k_f<0}. 
\end{remark}

\begin{remark}\label{remark:decoupling}
    In \cite[remark 2]{hanlong2020}, it is claimed that it is general to consider drifts which only depend on $X$ and $Y$ but not on $Z$, since, by an appropriate change of probability measure, one can always reformulate \eqref{eq:FBSDE} into an equivalent FBSDE whose drift is independent of $Z$ yet its solution coincides with the same quasi-linear PDE. However, we find that there are several important issues with this statement both theoretically and numerically:
    \begin{enumerate}[label=(\arabic*)]
        \item\label{remark:decoupling:practical_point}such approach would change the probability measure under which $X$ is simulated, consequently the training of neural networks would be carried out in a different spatial region, and therefore the algorithm may have poor accuracy around the area of the domain of interest;
        \item\label{remark:decoupling:theoretical_point} it is common that the reformulated FBSDE does not satisfy the theoretical assumptions needed for convergence while the original one does. For instance, a linear $z$ term in $b$ would result in a quadratic term in the reformulated driver, which violates the assumptions of Lipschitz continuity. We illustrate this through Example \ref{sec:ex1} in section \ref{section:numerical} below. Therefore, we believe our theory is a necessary generalization to \cite{hanlong2020} and it is applicable to a wider class of FBSDEs;
        \item with the same approach, one could remove the entire drift to the driver, and simulate a reformulated FBSDE with zero drift, but this, for the two reasons above, is rarely done in practice.
    \end{enumerate}
\end{remark}

Finally, let us derive a lower bound for $\bar{B}$ in \eqref{def:B_bar} by computing the infimum of $\bar{B}$ over all possible choices of $\lambda$s. Notice that $\bar{B}$ does not depend on $\lambda_2$, decreases in $\lambda_3$ and increases in $\lambda_4$, and therefore we shall mainly look at $\lambda_1$. Let 
\begin{align}\label{def:B_l}
    \bar{B}_{\ell} \coloneqq \inf_{\lambda_1>0, \lambda_3> 2mL^f_z, \lambda_4>0} \bar{B}.
\end{align}
\begin{enumerate}[label=(\arabic*)]
\item If $\lambda_1\geq \max(-2k^b-L^\sigma_x, 0)$, then $\bar{B}$ admits a unique stationary point along the $\lambda_1$ direction, and
\begin{equation}
\begin{aligned}
& \bar{B}_{\ell}  = m L^b_z L_x^g e^{( 2k^b + T^{-1}  + L^\sigma_x)T}  T, 
\qquad \arg \inf_{\lambda_1, \lambda_3, \lambda_4} \bar{B} = ( 1/T, +\infty, 0).
\end{aligned}
\end{equation}

\item If $0< \lambda_1< \max(-2k^b-L^\sigma_x, 0)$, then $\bar{B}$ is convex in $\lambda_1$ but there is not stationary point in this range, and
\begin{equation}
\begin{aligned}
& \bar{B}_{\ell} = m \frac{L^b_z}{-2k^b-L^\sigma_x}  L_x^g , 
\qquad 
\arg \inf_{\lambda_1, \lambda_3, \lambda_4} \bar{B} = (-2k^b-L^\sigma_x, +\infty, 0).
\end{aligned}
\end{equation}
\end{enumerate}
This result is particularly useful in practice and can serve as a preliminary test for the convergence of a given FBSDE. In fact, given an equation and all its relevant Lipschitz constants, if we find $\bar{B}_{\ell}\geq 1$, then we know that the conditions of theorem \ref{thm:estimate2} cannot be satisfied and that the deep BSDE algorithm is less likely to converge. On the other hand, if $\bar{B}_{\ell}$ is less than 1 or in particular even close to 0, we may check if $\bar{A}<1$, which can be solved efficiently by a numerical constrained minimization method ranging different $\lambda_1$, $\lambda_2$, $\lambda_3$ and $\lambda_4$. 

\section{Numerical experiments}\label{section:numerical}
We implemented the deep BSDE method in TensorFlow 2.9. 
The errors correspond to the discretized version of the left hand side of \eqref{eq:main_theorem:estimate} and are defined as follows
\begin{align}
    \text{error}(X)&\coloneqq \max_{0\leq n\leq N} \mathbb{E} \left[\left\|\hat{X}_n^\pi - X_{t_{n}}\right\|^2\right],\quad \text{error}(Y)\coloneqq \max_{0\leq n \leq N} \mathbb{E}\left[\left\| \hat{Y}_n^\pi - Y_{t_{n}}\right\|^2 \right],\\
    \text{error}(Z)&\coloneqq 1/N\sum_{n=0}^{N-1}\mathbb{E}\left[\left\| \hat{Z}_n^\pi - Z_{t_{n}}\right\|^2\right],\quad \text{total} = \text{error}(X) + \text{error}(Y) + \text{error}(Z).
\end{align}
We also define relative $L^2$ approximation errors by $\text{error}(X)/\mathbb{E}[\|X_{t_{n}}\|^2]$ and similarly for $Y, Z$, total.
In what follows, the true expectations are approximated over a Monte Carlo sample of size $2^{12}$. Given a classical solution to the corresponding quasi-linear PDE used for decoupling, we gather a reference solution to the associated FBSDE \eqref{eq:FBSDE} by an Euler-Maruyama simulation with $N'=10^4$ time steps, in order to guarantee that the time discretization error of the reference solution is negligible compared to the approximation errors incurred via the deep BSDE method. In all experiments below, we use neural networks with hyperbolic tangent activation, an input layer of width $d$, $2$ hidden layers $30+d$ neurons wide each, and an output layer of appropriate dimensions depending on the process approximated. As an optimization strategy, we use the Adam optimizer with default initializations and a learning rate schedule of exponential decay, starting from $10^{-2}$ with a decay rate of $10^{-2}$.  For a fixed $N$ we perform $2^{14}$ SGD iterations, and for each iteration we take an independent sample of $2^{10}$ trajectories of the underlying Brownian motion. All experiments below were run on an Dell Alienware Aurora R10 machine, equipped with an AMD Ryzen 9 3950X CPU (16 cores, 64Mb cache, 4.7GHz) and an Nvidia GeForce RTX 3090 GPU (24Gb). In order to assess the inherent stochasticity of both the regression Monte Carlo method and the SGD iterations, we run each experiment 5 times and report on the mean and standard deviations of the resulting independent approximations. All operations were carried out with single precision.
\subsection{Example 1}\label{sec:ex1}
\begin{figure}
    \centering
    \begin{subfigure}[t]{\figsize\textwidth}
        \centering
        \includegraphics[width=\textwidth]{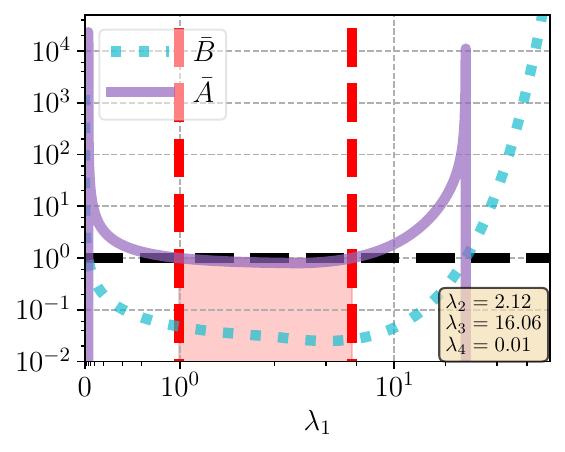}
        \caption{$\bar{B}, \bar{A}$ as functions of $\lambda_1$ for given $(\lambda_2, \lambda_3, \lambda_4)$ in case of \eqref{eq:ex1}. Dotted vertical red lines mark the endpoints of the interval where $\bar{B}, \bar{A}<1$, and the shaded red area the subset of the plane where the sufficient conditions of theorem \ref{thm:estimate2} are satisfied.}
        \label{fig:ex1:ab}
    \end{subfigure}\hspace{0.5em}
    \begin{subfigure}[t]{\figsize\textwidth}
        \centering
        \includegraphics[width=\textwidth]{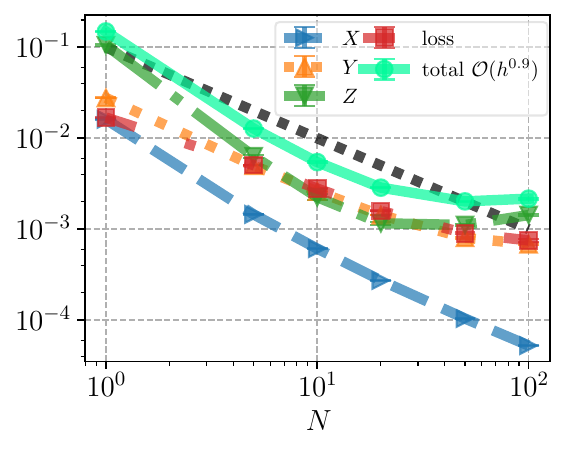}
        \caption{Convergence in $N$. Empirical convergence rate in labels. Dotted black line indicates the expected $\mathcal{O}(h)$ rate predicted by theorem \ref{thm:estimate2}.}
        \label{fig:ex1:conv}
    \end{subfigure}
    \caption{Example \ref{sec:ex1}. $T=0.25, X_0=(\pi/4,\dots,\pi/4)$.}
    \label{fig:ex1}
\end{figure}
The following example is a modified version of the one in \cite{bender2008time}, where in order to demonstrate our theoretical extension we include $Z$ coupling in the drift of the forward diffusion. The coefficients of the FBSDE system \eqref{eq:FBSDE} read as follows
\begin{align}\label{eq:ex1}
    b(t, x, y, z) &= \kappa_y \bar{\sigma} y \mathbf{1}_d + \kappa_z z^\top,\quad \sigma(t, x, y) = \bar{\sigma} yI_d,\quad g(x) = \sum_{i=1}^d \sin(x_i),\\
    f(t, x, y, z) &= \begin{aligned}[t]
        &-ry + 1/2 e^{-3r(T - t)} \bar{\sigma}^2 (\sum_{i=1}^d \sin(x_i))^3- \kappa_y \sum_{i=1}^d z_i - \kappa_z \bar{\sigma} e^{-3r (T-t)} \sum_{i=1}^d \sin(x_i)\sum_{i=1}^d \cos^2 (x_i),
    \end{aligned}
\end{align}
with $q=1, d=m$.
The analytical solution pair to the backward equation is given by
\begin{align}\label{eq:ex1:analytical_solution}
    y(t, x) = e^{-r(T-t)}\sum_{i=1}^d \sin(x_i),\quad z_i(t, x) = e^{-2r(T-t)} \bar{\sigma} \big(\sum_{j=1}^d \sin(x_j)\big)\cos(x_i).
\end{align}
We note that the equation above falls under the theoretical assumptions of section \ref{section:convergence}. In particular, we get the following set of values for the corresponding constants
$L_x^g = d, L_y^b=2(\kappa_y \bar{\sigma})^2, L_z^b=2\kappa_z^2, L_y^\sigma = d\bar{\sigma}^2, L_x^f = 3/2d(3\bar{\sigma}^2d^2/2+2\kappa_z \bar{\sigma}d)^2, L_y^f=18r^2, L_z^f=3.6 d\kappa_y^2, k^f=-r, k^b=L_x^\sigma=L_z^\sigma=L_x^b=0$. We consider the equation in $d=10$ dimensions. The strength of coupling is determined by the values $\bar{\sigma}, r, \kappa_y, \kappa_z$. In order to satisfy the sufficient conditions of theorem \ref{thm:estimate2}, we put $r=1, \bar{\sigma}=0.1, \kappa_y=10^{-1}, \kappa_z=10^{-2}$ and $T=0.25, X_0=(\pi/4,\dots,\pi/4)$. Convergence results are collected in figure \ref{fig:ex1}. Figure \ref{fig:ex1:ab} shows that the conditions of theorem \ref{thm:estimate2} are indeed satisfied, there exists a quadruple $(\lambda_1, \lambda_2, \lambda_3, \lambda_4)$, such that $\bar{B}, \bar{A}$ defined by \eqref{def:A_bar}, \eqref{def:B_bar} admit $\max(\bar{B}, \bar{A})<1$. In particular, for the fixed $\lambda_2, \lambda_3, \lambda_4$ we mark the interval of admissible $\lambda_1$s such that the sufficient conditions are satisfied within the shaded red area. Figure \ref{fig:ex1:conv} depicts the convergence of the deep BSDE method. Its most important implications are as follows. The convergence is only guaranteed in a \emph{posteriori} sense. In fact, as can be seen the convergence only shows the expected $\mathcal{O}(h)$ behavior whenever the loss function corresponding to the last term of \eqref{eq:main_theorem:estimate} is dominated by the discretization error. In particular, for $N\in\{50, 100\}$ we see that the approximation errors of $Z$ begin to stall and the total approximation errors are dominated by the loss function. This indicates that for very fine time grids one needs to make sure that the loss is appropriately minimized when trying to recover discretization errors. Given the global minimization structure of the deep BSDE method the corresponding optimization problem becomes more difficult with an increased number of time steps. This demonstrates a clear trade-off between discretization and optimization, which is fully explained by theorem \ref{thm:estimate2} and should be carefully considered in applications.
Nevertheless, we get an empirical convergence rate of $\mathcal{O}(h^{0.9})$ for all time points, and accounting for the reasoning above we recover the predicted rate of our convergence analysis.

Furthermore, let us return to remark \ref{remark:decoupling}. In particular, Han and Long in \cite[Remark 2]{hanlong2020} claim that the setting of $Z$ independent drift is general, since, due to the connections with the associated quasi-linear PDEs, one can always move the $Z$ dependence from the drift to the driver. In order to complement our arguments against this reasoning in remark \ref{remark:decoupling}, we provide a numerical demonstration of the points raised therein. One can derive a \emph{reformulated} FBSDE system which is \emph{decoupled in $Z$} and whose solution will coincide with \eqref{eq:ex1:analytical_solution}. This equation has a modified drift and driver
\begin{align}\label{eq:ex1:decoupled}
    \tilde{b}(t, x, y) = \kappa_y \bar{\sigma}y 1_d,\quad \tilde{f}(t, x, y, z)= f(t, x, y, z) + \kappa_z \|z\|^2/(\bar{\sigma}y),
\end{align}
whereas the rest of the coefficients remain the same as in \eqref{eq:ex1}. First, notice that even though \eqref{eq:ex1} satisfies the theoretical assumptions of theorem \ref{thm:estimate2}, the reformulated FBSDE \eqref{eq:ex1:decoupled} does not. In particular, $\tilde{f}$ is not Lipschitz continuous in $y, z$ which renders the results of theorem \ref{thm:estimate2}, or \cite{hanlong2020} as a limit case, inapplicable. This demonstrates point-\ref{remark:decoupling:theoretical_point} from remark \ref{remark:decoupling}. Nonetheless, as our convergence analysis only gives sufficient conditions one can still run the deep BSDE algorithm and find satisfactory results without theoretical guarantees. Table \ref{tab:ex1:decoupling} shows that for equation \eqref{eq:ex1:decoupled} this is not case. Running the algorithm on the reformulated FBSDE \eqref{eq:ex1:decoupled} results in diverging errors. In fact, due to the singularity arising in the driver $\tilde{f}$, the backward equation blows up as $N$ increases, which results in the forward equation also exploding due to the coupling. On the contrary, the original equation \eqref{eq:ex1} with $Z$ coupling converges as predicted by theorem \ref{thm:estimate2} and also illustrated by figure \ref{fig:ex1}. This observation demonstrates point-\ref{remark:decoupling:practical_point} from remark \ref{remark:decoupling} and implies that the Lipschitz features in our analysis are crucial also in practice, in order to avoid such explosion of the coupled forward diffusion. This is in line with related results in the literature, see \cite[pg.170]{bender2008time}. Overall, we conclude that the framework of $Z$ coupling in the drift cannot in general be circumvented neither theoretically nor numerically, and one needs to rely on our convergence result in theorem \ref{thm:estimate2} instead.
\begin{table}
    \caption{Comparison on the convergence of the deep BSDE algorithm between \eqref{eq:ex1} and \eqref{eq:ex1:decoupled}. Numbers correspond to the mean(std.dev.) of the total approximation errors of $5$ independent runs of the algorithm. $T=0.25, X_0=(\pi/4,\dots,\pi/4).$} 
    \resizebox{\textwidth}{!}{
    \begin{tabular}{l|ccccccc}
        $N$ & $1$ & $5$ & $10$ & $20$ & $50$ & $100$ \\
        \hline
        total-Eq.\eqref{eq:ex1} & $\num{1.49e-1}(\num{3e-3})$ & $\num{1.28e-2}(\num{3e-4})$ & $\num{5.5e-3}(\num{2e-4})$ & $\num{2.86e-3}(\num{5e-5})$ & $\num{2.02e-3}(\num{4e-5})$ & $\num{2.16e-3}(\num{6e-5})$\\
        total-Eq.\eqref{eq:ex1:decoupled} & $\num{1.52e-2}(\num{3e-3})$ & $\num{4e+2}(\num{2e+2})$ & $\num{2e+6}(\num{4e+6})$ & NaN & NaN & NaN
    \end{tabular}
    }
    \label{tab:ex1:decoupling}
\end{table}

\subsection{Example 2}
The following $d=25$ dimensional example is related to a linear-quadratic stochastic control problem appearing in \cite[example 3]{andersson2023}, which is defined by the following set of coefficients
\begingroup
\allowdisplaybreaks
\begin{align}\label{eq:ex2:lq_coefficients}
    M_x&=-\text{diag}(1, 2, 3, 1, 2, 3, 1, 2, 3, 1, 2, 3, 1, 2, 3, 1, 2, 3, 1, 2, 3, 1, 2, 3, 1),\\ 
    M_u&=(1, 1, 0.5, 1, 0, 0, 1, 1, 0.5, 1, 0, 0, 1, 1, 0.5, 1, 0, 0, 1, 1, 0.5, 1, 0, 0, 1)^\top,\\
    M_c&=\begin{aligned}[t]
        -M_x(&-0.2, -0.1, 0, 0, 0.1, 0.2, -0.2, -0.1, 0, 0, 0.1, 0.2,\\
        &-0.2, -0.1, 0, 0, 0.1, 0.2, -0.2, -0.1, 0, 0, 0.1, 0.2, -0.2)^\top,
    \end{aligned}\\
    \Sigma &= \begin{aligned}[t]
        \text{diag}(&0.15, 0.15, 0.25, 0.25, 0.25, 0.25, 0.25, 0.25, 0.25, 0.25, 0.25,\\ &0.25,0.15,
        0.15, 0.25, 0.25, 0.25, 0.25, 0.25, 0.25, 0.25, 0.25, 0.25, 0.25, 0.25),
    \end{aligned}\\
    R_x &= \begin{aligned}[t]
        2\text{diag}(&25, 1, 25, 1, 25, 1, 25, 1, 25, 1, 25, 1, 25, 1, 25, 1, 25, 1, 25, 1, 25, 1, 25, 1, 25),\quad R_u = 2,
    \end{aligned}\\
    G &= \begin{aligned}[t]
        2\text{diag}(&25, 25, 25, 25, 25, 25, 1, 25, 1, 25, 1, 25, 25, 25, 25, 25, 25, 25, 1, 25, 1, 25, 1, 25, 1).
    \end{aligned}
\end{align}
\endgroup
One can derive an associated FBSDE system via either dynamic programming (DP) or the stochastic maximum principle (SMP), see e.g. \cite{yong1999stochastic}. The corresponding equations' coefficients in \eqref{eq:FBSDE} then take the following form in case of DP
\begin{align}\label{eq:ex2:dp}
    b_\text{DP}(t, x, y, z) &= M_x x - M_u R_u^{-1}M_u^\top(z \Sigma^{-1})^\top,\quad  \sigma_\text{DP}(t, x, y) = \Sigma,\\
    f_\text{DP}(t, x, y, z)&= 1/2(x^\top R_x x + z\Sigma^{-1}(R_u^{-1}M_u^\top)^\top M_u^\top (z\Sigma^{-1})^\top),\quad g_\text{DP}(x)= 1/2x^\top G x,
\end{align}
with $q=1, d=m=25$; and in case of SMP
\begin{align}\label{eq:ex2:smp}
   b_\text{SMP}(t, x, y, z) &= M_x x + M_u R_u^{-1} M_u^\top y,\quad \sigma_\text{SMP}(t, x, y) = \Sigma,\\
    f_\text{SMP}(t, x, y, z)&= -R_x x +M_x y,\quad g_\text{SMP}(x)=- G x,
\end{align}
with $q=d=m=25$.
The main difference between the two formulations is that \eqref{eq:ex2:dp} leads to an FBSDE where coupling into $b$ occurs through $Z$, whereas in \eqref{eq:ex2:smp} only through $Y$. Furthermore, the first equation gives a scalar-valued BSDE, whereas \eqref{eq:ex2:smp} is a vector-valued one. Both equations admit semi-analytical solutions given by the numerical resolution of a system of Ricatti ODEs.
For details, we refer to \cite{huang_convergence_2025} and the references therein. 
\begin{remark}\label{remark:lipschitz_lq_problem}
    Notice that the dynamic programming FBSDE \eqref{eq:ex2:dp} does not satisfy the Lipschitz conditions imposed in section \ref{section:convergence}. In fact, $g_\text{DP}, f_\text{DP}$ in \eqref{eq:ex2:dp} are quadratic in $x, z$. Nevertheless, one can use a localization argument, and consider the equation over a compact domain such that the corresponding coefficients become Lipschitz continuous with a constant depending on the width of the domain. We choose truncation radiuses based on upper bounds for $99\%$ quantiles of $\|X_t\|$, $\|Z_t\|$ computed over an independent Monte Carlo simulation consisting of $2^{20}$ paths using the semi-analytical reference solution. This results in a negligible truncation error and truncation radiuses $r_x=1, r_z=10$, with which we obtain a Lipschitz continuous approximation of $g_\text{DP}, f_\text{DP}$ for which the constants in section \ref{section:convergence} can be computed even in the case of \eqref{eq:ex2:dp}, and read as follows $L_x^g = r_x^2\|G\|^2_2/2, L_x^b = 2\|M_x\|^2_2, L_z^b = 2\|M_uR_u^{-1}M_u^\top (\Sigma^{-1})^\top\|_2^2, L_x^f = r_x^2\|R_x\|^2_2, L_z^f=r_z^2\|\Sigma^{-1}(R_u^{-1}M_u^\top)^\top M_u^\top (\Sigma^{-1})^\top\|_2^2, k^b=-1, L_y^b=L^\sigma_x=L^\sigma_z=L_y^f=k^f=0$.
\end{remark}

\subsubsection{Non-convergence of the deep BSDE method}\label{sec:ex2:1}
\begin{figure}
    \centering
    \begin{subfigure}[t]{\figsize\textwidth}
        \centering
        \includegraphics[width=\textwidth]{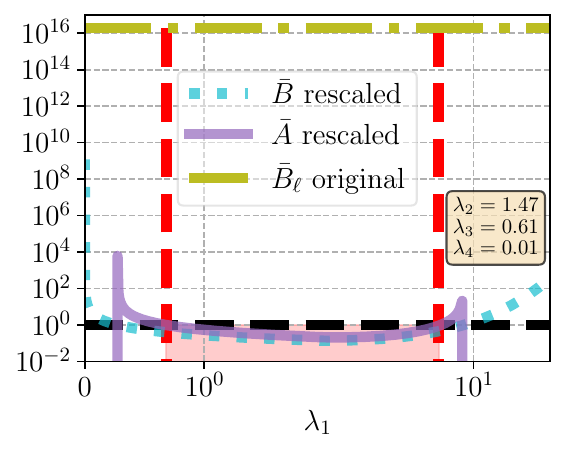}
        \caption{$\bar{B}, \bar{A}$ as functions of $\lambda_1$ for given $(\lambda_2, \lambda_3, \lambda_4)$ in case of the "rescaled" version of \eqref{eq:ex2:dp}. $\bar{B}_\ell$ as lower bound for $\bar{B}$ in case of the "original" version. Dotted vertical red lines mark the endpoints of the interval where $\bar{B}, \bar{A}<1$, and the shaded red area the subset of the plane where the sufficient conditions of theorem \ref{thm:estimate2} are satisfied.}
        \label{fig:ex2:nonconv:ab}
    \end{subfigure}\hspace{0.5em}
    \begin{subfigure}[t]{\figsize\textwidth}
        \centering
        \includegraphics[width=\textwidth]{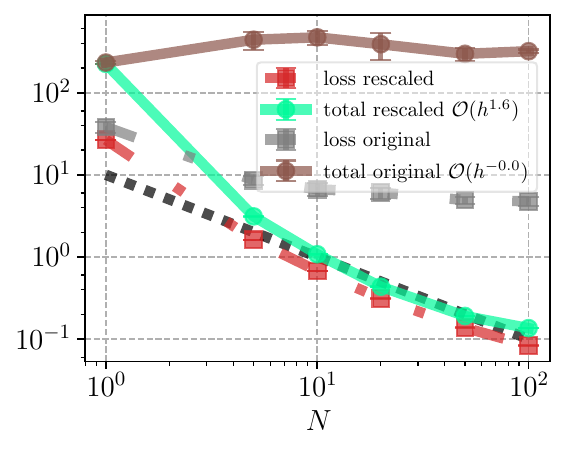}
        \caption{Convergence in $N$. Empirical convergence rates in labels. Dotted black line indicates the expected $\mathcal{O}(h)$ rate predicted by theorem \ref{thm:estimate2}.}
        \label{fig:ex2:nonconv:conv}
    \end{subfigure}
    \caption{Example \ref{sec:ex2:1}. Comparison of the Deep BSDE method on \eqref{eq:ex2:dp} between coefficients as in \eqref{eq:ex2:lq_coefficients} (original) and $M_u$ replaced by $M_u/150$ (rescaled). $T=1/2, X_0=(0.1, \dots, 0.1)$.}
    \label{fig:ex2:nonconv}
\end{figure}
Let us first focus on the FBSDE stemming from the dynamic programming principle. In \cite{andersson2023} it was observed that the deep BSDE method does not converge for the FBSDE defined by \eqref{eq:ex2:dp} with coefficients as in \eqref{eq:ex2:lq_coefficients} and $T=1/2$, $X_0=(0.1, \dots, 0.1)$. Earlier convergence analyses such as \cite{hanlong2020} could not justify this phenomenon, as in \eqref{eq:ex2:dp} the coupling into the drift takes places via $Z$, which fell out of the framework of the aforementioned paper. Our generalization provided by theorem \ref{thm:estimate2} enables the treatment of such equations, and in particular explains the non-convergence phenomenon. The problem lies in the \emph{strength} of the coupling of $Z$ into the forward diffusion. In order to demonstrate this, we consider two versions of \eqref{eq:ex2:dp}, which differ in the coefficient $M_u$. One labelled as \emph{"original"}, where the coefficients of the corresponding LQ problem are as in \eqref{eq:ex2:lq_coefficients}, and a \emph{"rescaled"} version where $M_u$ is replaced by $M_u/150$ and all other coefficients remain the same.
Our findings are illustrated by figure \ref{fig:ex2:nonconv}.
In particular, figure \ref{fig:ex2:nonconv:ab} depicts the contraction constants $\bar{B}, \bar{A}$ defined by \eqref{def:B_bar}, \eqref{def:A_bar} appearing in theorem \ref{thm:estimate2} for both versions. As can be seen, in case of the "original" equation one gets a lower bound $\bar{B}_\ell$ defined by \eqref{def:B_l} which is of $\mathcal{O}(10^{16})$. In fact, this implies that the conditions of theorem \ref{thm:estimate2} can never be satisfied for the original version of \eqref{eq:ex2:dp}. However, as is also suggested by figure \ref{fig:ex2:nonconv:ab}, decreasing the strength of the coupling by the given rescaling of $M_u$ we get an equation whose $\bar{B}, \bar{A}$ satisfy the sufficient conditions \eqref{thm: condition}. Motivated by this, we collect the convergence of the total approximation errors in figure \ref{fig:ex2:nonconv:conv}. In line with the discussion above, we find that the deep BSDE method does not converge in the "original" case, whereas it does converge for the "rescaled" version, once we make sure that there exist appropriate contraction constants $\bar{B}, \bar{A}<1$ such that the sufficient conditions of theorem \ref{thm:estimate2} are satisfied. The empirical rate of convergence is of $\mathcal{O}(h^{1.6})$. Note that this example illustrates the weak coupling condition described in point \ref{interpretation:fully_coupled:small_L_b_z} of section \ref{section:interpretation}.
We emphasize that $\bar{B}$ is inherent to the $Z$ coupling and it is the main novelty of our convergence analysis.

\subsubsection{Stochastic control via DP or SMP}\label{sec:ex2:2}
\begin{figure}
    \centering
    \begin{subfigure}[t]{\figsize\textwidth}
        \centering
        \includegraphics[width=\textwidth]{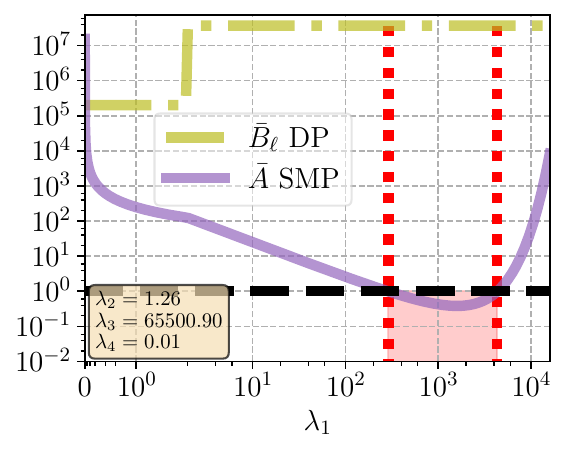}
        \caption{$\bar{A}$ as function of $\lambda_1$ for given $(\lambda_2, \lambda_3, \lambda_4)$ in case of the SMP equation in \eqref{eq:ex2:smp}. $\bar{B}_\ell$ as lower bound for $\bar{B}$ in case of the DP equation in \eqref{eq:ex2:dp}. Dotted vertical red lines mark the endpoints of the interval where $\bar{A}<1$, and the shaded red area the subset of the plane where the sufficient conditions of theorem \ref{thm:estimate2} are satisfied.}
        \label{fig:ex2:DPvsSMP:ab}
    \end{subfigure}\hspace{0.5em}
    \begin{subfigure}[t]{\figsize\textwidth}
        \centering
        \includegraphics[width=\textwidth]{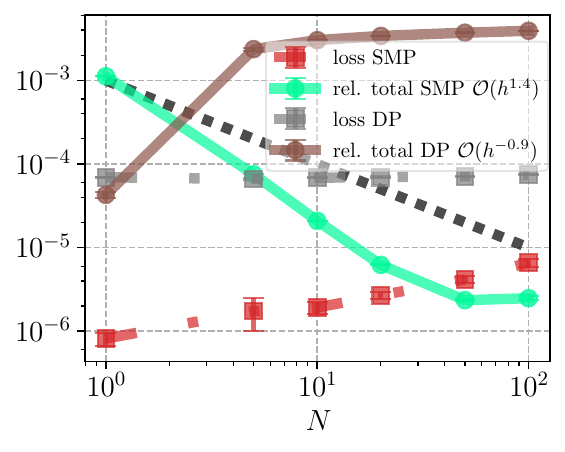}
        \caption{Convergence in $N$. Empirical convergence rates in labels. Dotted black line indicates the expected $\mathcal{O}(h)$ rate predicted by theorem \ref{thm:estimate2}.}
        \label{fig:ex2:DPvsSMP:conv}
    \end{subfigure}
    \caption{Example \ref{sec:ex2:2}. Comparison between the FBSDEs derived via dynamic programming \eqref{eq:ex2:dp} and the stochastic maximum principle \eqref{eq:ex2:smp} approximated by the deep BSDE method. Coefficients as in \eqref{eq:ex2:lq_coefficients}, $T=10^{-3}, X_0=(0.1, \dots, 0.1)$.}
    \label{fig:ex2:DPvsSMP}
\end{figure}
In \cite{huang_convergence_2025} a convergence analysis has been given in the context of solving stochastic control problems with the deep BSDE method applied on the FBSDE system derived through the stochastic maximum principle (SMP) similar to 
\eqref{eq:ex2:smp}. This analysis provided a natural extension to the works of Han and Long by extending \cite{hanlong2020} to vector-valued settings. 
In \cite{andersson2023} it was found that for certain FBSDEs derived in the dynamic programming framework, such as \eqref{eq:ex2:dp}, the Deep BSDE method does not converge. On the other hand, the authors of the present paper found in \cite{huang_convergence_2025} that the same problems tackled by the stochastic maximum principle lead to an FBSDE \eqref{eq:ex2:smp} for which the deep BSDE method does converge to the unique solution triple. Our results in theorem \ref{thm:estimate2} provide a natural explanation for these empirical findings. In fact, the problem lies in the $Z$ coupling in \eqref{eq:ex2:dp}, and in particular the value of the contraction constant $\bar{B}$ defined in \eqref{def:B_bar}. Conversely, \eqref{eq:ex2:smp} derived from the stochastic maximum principle has coupling only in $Y$, not in $Z$, leading to $\bar{B}\equiv 0$. In order to illustrate this, we ran the deep BSDE algorithm for both \eqref{eq:ex2:dp} and \eqref{eq:ex2:smp} with coefficients defined by \eqref{eq:ex2:lq_coefficients}, a short time horizon $T=10^{-3}$ and $X_0=(0.1, \dots, 0.1)$.

First, notice that only \eqref{eq:ex2:smp} satisfies the Lipschitz conditions imposed in section \ref{section:convergence}, with corresponding constants $L^g_x = \|G\|_2^2, L_x^b=2\|M_x\|^2_2, L_y^b=2\|M_uR_u^{-1}M_u^\top\|_2^2, L_x^f=2\|R_x\|_2^2, L_y^f=2\|M_x\|_2^2$, $k^f=k^b=-1, L_z^b=L_x^\sigma=L_y^\sigma=L_z^\sigma=L_z^f=0$. Hence, only the SMP formulation guarantees direct applicability of the convergence results. Nevertheless, with the localization argument in remark \ref{remark:lipschitz_lq_problem} one can find an accurate Lipschitz continuous approximation of the DP equation \eqref{eq:ex2:dp} for which the sufficient conditions can also be checked. More importantly, \eqref{eq:ex2:dp} and \eqref{eq:ex2:smp} also differ in the type of coupling. Namely, in case of the former, $Z$ couples into the forward diffusion, whereas in case of the latter only $Y$ does. This in particular implies that besides the different Lipschitz constants, the two equations also differ in terms of the sufficient conditions of \eqref{eq:main_theorem:estimate}. In fact, for the SMP equation \eqref{eq:ex2:smp}, there is no coupling in $Z$ which implies $\bar{B}\equiv 0$, see also discussion in section \ref{section:interpretation}. Hence \eqref{eq:ex2:smp} reduces to the theoretical framework of \cite{huang_convergence_2025, hanlong2020} under which it is sufficient for $\bar{A}<1$ to hold. On the other hand, \eqref{eq:ex2:dp} couples through $Z$, which in light of theorem \ref{thm:estimate2} implies that $\bar{B}, \bar{A}<1$ need to hold simultaneously, leading to stronger conditions to hold.

The above discussion is illustrated by figure \ref{fig:ex2:DPvsSMP}. From figure \ref{fig:ex2:DPvsSMP:ab}, we find that in case of the dynamic programming equation \eqref{eq:ex2:dp} $\bar{B}$ admits a lower bound $\bar{B}_\ell$ defined by \eqref{def:B_l} which is of $\mathcal{O}(10^5)$. In particular, this implies that the dynamic programming formulation can never satisfy the sufficient conditions imposed by theorem \ref{thm:estimate2}. On the contrary, under the SMP formulation we find a range of $\lambda_1, \lambda_2, \lambda_3, \lambda_4$ such that $\bar{A}<1$ meaning that the convergence criteria are met. Motivated by these conditions, the convergence of the deep BSDE method is collected in figure \ref{fig:ex2:DPvsSMP:conv} for both equations. In line with our previous observations, we find that the method converges with an empirical rate of $\mathcal{O}(h^{1.4})$ for the SMP equation \eqref{eq:ex2:smp}, which in this particular case is even faster than the rate predicted by \cite{hanlong2020, huang_convergence_2025}. On the other hand, for the FBSDE derived via dynamic programming we find that the deep BSDE does not converge. As shown by figure \ref{fig:ex2:DPvsSMP:conv} this is not due to the posteriori nature of the estimate in theorem \ref{thm:estimate2}, as the errors are growing even as the loss functional decreases. The critical phenomenon is indeed the coupling in $Z$, and the extra conditions it imposes as predicted by theorem \ref{thm:estimate2}. These findings complement our earlier convergence results in the context of stochastic control tackled by the deep BSDE method and the stochastic maximum principle \cite{huang_convergence_2025}. In particular, our generalization in theorem \ref{thm:estimate2} explains our empirical findings in \cite{huang_convergence_2025} suggesting that for a large class of stochastic control problems, deriving an associated FBSDE through SMP leads to a system which is more tractable by deep BSDE formulations. The crucial property here is the lack of $Z$ coupling leading to milder conditions to ensure convergence according to our new convergence result in theorem \ref{thm:estimate2}.

\section{Conclusion}
In this paper a generalized proof for the convergence of the deep BSDE method was given. Our main contributions can be summarized as follows. We extended the convergence analysis of \cite{hanlong2020} to FBSDEs with fully-coupled drift coefficients. Such an extension is essential in practice as it enables the treatment of FBSDEs stemming from stochastic optimal control problems.
Our theory provides a unified framework and, in particular, includes earlier results from the literature as limit cases. Due to the extra $Z$ coupling, the final posteriori error estimate stated in theorem \ref{thm:estimate2} requires an additional condition expressed by \eqref{thm: condition}. These sufficient conditions are directly verifiable for any equation, and as shown in section \ref{section:interpretation}, they are satisfied under heuristic settings such as weak coupling, short time duration or monotonicity. Moreover, as demonstrated in section \ref{sec:ex2:1}, our theory explains the non-convergence of the deep BSDE method observed in recent literature, and provides direct guidelines to avoid such issues and ensure convergence in practice.
Several numerical experiments were presented for high-dimensional equations, which support and highlight key features of the theoretical findings. 
\paragraph{Acknowledgement}
The first author acknowledges financial support from the Peter Paul Peterich Foundation via the TU Delft University Fund. The second author would like to thank the China Scholarship Council (CSC)
for the financial support. 

\bibliographystyle{unsrt} 
\bibliography{ref.bib} 


\end{document}